\documentclass{article}
\usepackage[utf8]{inputenc}

\usepackage{graphicx,graphics}

\usepackage{rotating}
\usepackage[twoside]{geometry}
\usepackage{epsfig}
\usepackage{cmap}

\usepackage{graphicx,graphics}

\usepackage{hyperref}
\hypersetup{
pdftitle={AdePpaper},            
colorlinks=true,      
linkcolor=black,         
citecolor=blue}

\usepackage{amssymb,amsfonts,amstext,amsthm}
\usepackage{mathrsfs}
\usepackage[intlimits]{amsmath}
\newtheorem{proposition}{\bf Proposition}[section]

\newtheorem{lemma}{\bf Lemma}[section]
\newtheorem{theorem}{\bf Theorem}[section]

\newtheorem{remark}{Remark}[section]

\newcounter{SE}

\title{Quantitative spectral stability and sharp Fourier estimates for periodic CMV matrices with decaying perturbations}
\author{Moacir Aloisio, C\'esar R. de Oliveira and Mariane Pigossi}

\begin{document}

\maketitle

\begin{abstract}
We study half-line CMV matrices whose Verblunsky coefficients are decaying perturbations of periodic sequences. We quantify the stability of fine properties of the absolutely continuous spectral measure under such perturbations. More precisely, we establish
H\"older continuity of the perturbed spectral measure and derive sharp time-averaged Fourier estimates governing the associated quantum dynamics. These results remain valid up to the edges of the absolutely continuous spectrum. Since the spectral measure of a half-line CMV matrix is precisely the orthogonality measure for the corresponding
orthogonal polynomials on the unit circle (OPUC), our results also yield new statements for OPUC whose Verblunsky coefficients are  decaying perturbations of periodic sequences.
\end{abstract}

\ 

\noindent{\bf Keywords}: CMV matrices, OPUC, quantum dynamics and spectral measures.

\

\noindent{\bf  AMS classification codes}: 28A80 (primary), 42A85 (secondary).    

\renewcommand{\thetable}{\Alph{table}}


\section{Contextualization and main result}

We study quantitative properties of absolutely continuous spectral measures associated with half-line CMV matrices whose Verblunsky
coefficients are  decaying perturbations of periodic sequences. In particular, we establish critical H\"older continuity
of the spectral measure and sharp time-averaged Fourier estimates related to discrete-time quantum return probabilities.

Let
\[
\mathbb D=\{z\in\mathbb C: |z|<1\},
\, \, 
\partial\mathbb D=\{z\in\mathbb C: |z|=1\}.
\]
Given a sequence of Verblunsky coefficients
$\alpha=\{\alpha_n\}_{n\geq0}\subset\mathbb D$, set
\[
\rho_n=\left(1-|\alpha_n|^2\right)^{1/2},
\, \,  n\geq0.
\]
The half-line CMV matrix associated with $\alpha$ is the unitary
operator
\[
\mathcal C_\alpha:\ell^2(\mathbb N_0)\to \ell^2(\mathbb N_0)
\]
whose matrix representation in the canonical basis
$\{\delta_n\}_{n\geq0}$ is
\[
\mathcal C_\alpha =
\begin{pmatrix}
\overline{\alpha}_0 & \overline{\alpha}_1\rho_0 & \rho_1\rho_0
& 0 & 0 & 0 & \cdots \\
\rho_0 & -\overline{\alpha}_1\alpha_0 & -\rho_1\alpha_0
& 0 & 0 & 0 & \cdots \\
0 & \overline{\alpha}_2\rho_1 & -\overline{\alpha}_2\alpha_1
& \overline{\alpha}_3\rho_2 & \rho_3\rho_2 & 0 & \cdots \\
0 & \rho_2\rho_1 & -\rho_2\alpha_1
& -\overline{\alpha}_3\alpha_2 & -\rho_3\alpha_2 & 0 & \cdots \\
0 & 0 & 0 & \overline{\alpha}_4\rho_3
& -\overline{\alpha}_4\alpha_3 & \overline{\alpha}_5\rho_4
& \rho_5\rho_4 & \cdots \\
0 & 0 & 0 & \rho_4\rho_3
& -\rho_4\alpha_3 & -\overline{\alpha}_5\alpha_4
& -\rho_5\alpha_4 & \cdots \\
\vdots & \vdots & \vdots & \vdots & \vdots & \vdots & \ddots
\end{pmatrix}.
\]

Throughout the paper, we consider half-line CMV matrices on $\ell^2(\mathbb N_0)$, with the canonical boundary condition at the
origin, and for each $\mathcal C_\alpha$ let $\mu_\alpha$ denote its spectral measure associated with the cyclic vector $\delta_0$.
Equivalently, $\mu_\alpha$ is the unique probability measure on
$\partial\mathbb D$ satisfying
\[
\left\langle \delta_0,\mathcal C_\alpha^k\delta_0\right\rangle
=
\int_{\partial\mathbb D} z^k\,d\mu_\alpha(z),
\, \,  k\in\mathbb Z.
\]

CMV matrices are the unitary analogues of Schr\"odinger and Jacobi operators and play a central role in the spectral theory of orthogonal polynomials on the
unit circle. Periodic Verblunsky coefficients form one of the most classical classes in this setting: we say that
$\alpha=\{\alpha_n\}_{n\geq0}$ is periodic if there exists an integer $p\geq1$, called the period, such that
$\alpha_{n+p}=\alpha_n$ for all $n\geq0$. The associated essential spectrum is a finite union of closed arcs of the unit circle, and the
corresponding spectral measures are purely absolutely continuous on the essential spectrum, up to possible singular behavior at the band
edges \cite{DamanikFillmanVance,GuoDamanikOng,MungerOng,SimonOPUC1}. 

In this work we consider perturbations of the form
\[
\alpha_n^\kappa=\alpha_n+\kappa b(n),
\, \,  n\geq0,
\]
where $\alpha=\{\alpha_n\}_{n\geq0}$ is periodic, $b$ decays at least polynomially, sufficiently fast, and $\kappa$ is sufficiently small. Polynomial decay implies that the perturbation is compact, and hence the essential spectrum is preserved \cite{SimonOPUC1,SimonOPUC2}. However, preservation of the essential spectrum does not determine the spectral type inside the bands.
Likewise, even when the absolutely continuous component is stable, qualitative perturbation theory does not provide quantitative control
of the spectral measure, particularly at the band edges. The main result below establishes purely absolutely continuous spectrum
together with critical $1/2$-H\"older continuity and sharp time-averaged Fourier estimates up to the spectral edges.

We present our main result.

\begin{theorem} \label{thm:perturbed-periodic-cmv}
Let $0<\beta <1$ and let $\{\alpha_n\}_{n\geq0}\subset\mathbb D$ be a
periodic sequence of Verblunsky coefficients such that
\[
\sup_{n\geq0}|\alpha_n|< \beta<1,
\]
and let $\mathcal C_\alpha$ be the associated half-line CMV matrix
acting on $\ell^2(\mathbb N_0)$. Let $\gamma>0$, $\tau>5$, and let $b:\mathbb N_0\to\mathbb C$ satisfy
\[
|b(n)|\leq \gamma (n+1)^{-\tau}, \, \,  n\geq 0,
\]
(in particular $b\in\ell^\infty(\mathbb N_0)$). For $\kappa\in\mathbb C$ with $|\kappa|$ sufficiently small,
set
\[
\alpha_n^\kappa=\alpha_n+\kappa b(n),
\, \,   n\geq0.
\]
Write $\mathcal C_\kappa=\mathcal C_{\alpha^\kappa}$ for the corresponding perturbed CMV matrix and $\mu_{\alpha^\kappa}$ for its spectral measure associated with $\delta_0$.  Then there exists $\varepsilon>0$ such that, for every
$|\kappa|<\varepsilon$, the half-line CMV matrix $\mathcal C_\kappa$
satisfies:
\begin{enumerate}
\item[(i)] $
\mathfrak e
=
\sigma_{\mathrm{ess}}(\mathcal C_\kappa)
=
\sigma_{\mathrm{ess}}(\mathcal C_\alpha),$ 
and the spectral measure of $\mathcal C_\kappa$ is purely absolutely
continuous on $\mathfrak e$;

\item[(ii)]
the spectral measure $\mu_{\alpha^\kappa}|_{\mathfrak e}$ is
$\frac12$-H\"older continuous on $\mathfrak e$;

\item[(iii)]
for
\[
\psi
=
\frac{\chi_{\mathfrak e}(\mathcal C_\kappa)\delta_0}
{\|\chi_{\mathfrak e}(\mathcal C_\kappa)\delta_0\|}
\]
and for every integer $K\geq2$,
\[
\frac1K
\sum_{n=0}^{K-1}
\left|
\left\langle
\psi,\mathcal C_\kappa^n\psi
\right\rangle
\right|^2
\lesssim
\frac{\log K}{K},
\]
where $\chi_{\mathfrak e}(\mathcal C_\kappa)$ is defined through the
spectral functional calculus ($\chi_{\mathfrak e}$ is the characteristic function of the set~$\mathfrak e$).
\end{enumerate}
\end{theorem}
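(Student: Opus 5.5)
Our plan is to reduce everything to a uniform bound on the density of $\mu_{\alpha^\kappa}$ of the form
\[
\frac{d\mu_{\alpha^\kappa}}{dm}(e^{i\theta}) \lesssim \frac{1}{\sqrt{\operatorname{dist}(e^{i\theta},\partial\mathfrak e)}},\qquad e^{i\theta}\in\mathfrak e,
\]
with implicit constant uniform in $|\kappa|<\varepsilon$. Here $\partial\mathfrak e$ denotes the finite set of band edges.

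\textbf{Step 1: compact perturbation and transfer matrices.} Since $|\kappa b(n)|\le|\kappa|\gamma(n+1)^{-\tau}\to0$, the difference $\mathcal C_\kappa-\mathcal C_\alpha$ is compact, and Weyl's theorem gives $\sigma_{\mathrm{ess}}(\mathcal C_\kappa)=\mathfrak e$. For $|\kappa|$ small we also have $\sup_n|\alpha^\kappa_n|<\beta$, so all Szeg\H{o} transfer matrices are uniformly bounded.

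For the periodic problem, $z\in\operatorname{int}\mathfrak e$ means the monodromy $T_p(z)$ is elliptic: $|\Delta(z)|<2$, where $\Delta$ is the discriminant. Near an edge $z_0$, $T_p(z_0)$ is parabolic (a Jordan block), and the standard estimate is
\[
\|T_p(z)^m\|\lesssim \min\Bigl(m,\ |\Delta(z)^2-4|^{-1/2}\Bigr),\qquad |\Delta(z)^2-4|\asymp \operatorname{dist}(z,\partial\mathfrak e).
\]

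\textbf{Step 2: variation of parameters (the main obstacle).} Write the perturbed transfer matrices as $T^\kappa(n,z)=T(n,z)\prod(I+O(|\kappa b(j)|))$ and use a Duhamel expansion. Each term involves $\|T(n,z)\|\,\|T(j,z)^{-1}\|\lesssim n$ uniformly up to the edges. So the perturbative series is controlled by $\sum_n n^2|\kappa b(n)|$, or by a higher moment once the growth of products is handled carefully. This is where $\tau>5$ should enter: we need enough polynomial moments to absorb the linear growth at the edges, including when we differentiate or estimate Jost-type solutions.

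The goal is to construct Jost solutions $u^\kappa(n,z)$ that are asymptotic to the periodic Floquet solutions, with uniform bounds near the edges. We then obtain a Wronskian bound
\[
|W(u^\kappa,\bar u^\kappa)|\gtrsim |\Delta^2-4|^{1/2}
\]
and a matching bound for the Jost function $|J_\kappa(z)|\gtrsim |\Delta^2-4|^{1/2}$. The latter should follow by continuity in $\kappa$ from the periodic case, on which one must also rule out edge resonances. If a resonance is present in the periodic case, we would need to track it, using the fact that the periodic weight is then bounded near that edge. The density formula
\[
w_\kappa(\theta)=\frac{\operatorname{Im}(\text{Wronskian})}{|J_\kappa|^2}
\]
then gives $w_\kappa\lesssim \operatorname{dist}^{-1/2}$ near the edges and $w_\kappa\gtrsim$ a positive bound in the interior. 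Since the Jost function does not vanish on $\operatorname{int}\mathfrak e$, there is no singular part there, and the finitely many edges carry no mass because $w_\kappa\in L^1$ with the bound above. This proves (i).

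\textbf{Step 3: Hölder continuity and Fourier estimates.} Part (ii) is immediate from the density bound: for an arc $I$ with $|I|=r$,
\[
\mu(I)\le\int_I C\,\operatorname{dist}^{-1/2}\lesssim r^{1/2}.
\]
For (iii), let $\nu=\mu_{\alpha^\kappa}|_{\mathfrak e}/\mu_{\alpha^\kappa}(\mathfrak e)$, so that $\langle\psi,\mathcal C_\kappa^n\psi\rangle=\hat\nu(n)$. Apply the Strichartz/Last-type bound
\[
\frac1K\sum_{n<K}|\hat\nu(n)|^2\lesssim \iint \frac{1}{1+K^2|\theta-\theta'|^2}\,d\nu\,d\nu,
\]
where the kernel is obtained from a Fejér-type majorant. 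Insert the densities to reduce to
\[
\iint \frac{d_1^{-1/2}(\theta)\,d_1^{-1/2}(\theta')}{1+K^2|\theta-\theta'|^2}\,d\theta\,d\theta'.
\]
Split into the region near an edge and the complementary region. The diagonal contribution near an edge gives $\int_{1/K}^{1}\frac{1}{K}\,t^{-1}\,dt\sim\frac{\log K}{K}$, while the remaining regions give $O(1/K)$. This yields the $\frac{\log K}{K}$ bound.
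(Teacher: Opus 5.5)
Your Step 3 is sound. The Fej\'er majorant $F_{2K}(t)\lesssim(1+K^2t^2)^{-1}$ is a valid substitute for the paper's periodic Gaussian kernel, and your splitting does give $\log K/K$. But Step 2, which carries the whole difficulty of the theorem, is a programme rather than a proof, and the mechanism you indicate would not give the edge bound. The crux is the lower bound $|J_\kappa(z)|\gtrsim|\Delta(z)^2-4|^{1/2}$, which you hope to get ``by continuity in $\kappa$''.

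Edge resonances cannot be ruled out for a general periodic $\alpha$. For the rotated sequences $\overline{\lambda}\alpha$, which share the discriminant, the denominator $|1-\overline{\lambda}zf(z)|$ of the periodic weight vanishes at a given open edge for a suitable phase. At a resonant edge $z_0$ the periodic Jost function vanishes and the periodic weight is unbounded, of order $|z-z_0|^{-1/2}$. So your fallback remark that the weight is then bounded is backwards. Sup-norm closeness $|J_\kappa-J_0|\lesssim|\kappa|$ is useless where $|z-z_0|\lesssim|\kappa|^2$, since there $|J_0|\lesssim|\kappa|$. Hence no lower bound of order $|z-z_0|^{1/2}$ follows, let alone one uniform in $\kappa$. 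You would need a genuine threshold analysis of $J_\kappa$ in the uniformizing variable: no zeros on the open band (via the Wronskian identity) and at most a simple zero at the edge. That requires moment conditions on $b$ and is not supplied.

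Two smaller problems in Step 2. Bounding $w_\kappa=\mathrm{Im}(W)/|J_\kappa|^2$ from above needs an upper bound on $\mathrm{Im}\,W$, not the lower bound you state. And $|\Delta^2-4|\asymp\operatorname{dist}(z,\partial\mathfrak e)$ fails at closed gaps, where $\Delta^2-4$ has a double zero.

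Finally, your exclusion of edge atoms in (i) is a non sequitur. Integrability of the a.c. density says nothing about point masses at the finitely many edges, so a separate argument is needed. The paper (Lemma~\ref{lem:no-edge-atoms}) shows $\sum_n\|A_{\alpha^\kappa}(n,\zeta)u_0\|^2=\infty$ at each edge $\zeta$, by comparison with a solution of the periodic problem for $\overline{\lambda}\alpha$.

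The missing idea is that no lower bound on any Jost function is needed. The paper writes
$A_{\alpha^\kappa}(n,z)u_0=\|v(z)\|A_\alpha(n,z)u_\kappa(z)-R_n(z)u_0$, with $v=(I-Q)u_0$ and $\|R_n\|\lesssim|\kappa|(n+1)^{4-\tau}$. Combined with the crude bound $\|A_\alpha(n,z)w\|\ge(\gamma(n+1))^{-1}$, this gives $\|A_{\alpha^\kappa}(n,z)u_0\|^{-2}\le16\|A_\alpha(n,z)u_\kappa(z)\|^{-2}$ for large $n$, uniformly on $\mathfrak e$. This is precisely where $\tau>5$ is used.

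$J$-unitarity of $I-Q$ makes $u_\kappa$ isotropic, hence of the form $\eta\,(1,\lambda(z))^{T}/\sqrt2$. The conjugation $S(\alpha,z)D_\lambda=D_\lambda S(\overline{\lambda}\alpha,z)$ then turns the right-hand side into Bernstein--Szeg\H{o} approximants of rotated periodic problems. Their weights satisfy $\frac{1-|f|^2}{|1-\overline{\lambda}zf|^2}\le\frac{1+|f|}{1-|f|}\lesssim(4-\Delta^2)^{-1/2}$ uniformly in the phase. Dominating the perturbed measure by this worst-case Aleksandrov weight handles resonant and non-resonant edges at once. If you want to keep the Jost route, you must prove the edge lower bound for each fixed $\kappa$, including the resonant case, and add a separate edge-atom argument.
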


The proof of Theorem \ref{thm:perturbed-periodic-cmv} is based on a comparison between the Szeg\H{o} transfer matrices of the periodic and perturbed CMV matrices, combined with the Bernstein-Szeg\H{o} approximation and periodic Floquet theory \cite{SimonOPUC1,SimonOPUC2}. This transfers the square-root band-edge behavior of the periodic
spectral measure to the perturbed one, yielding both the critical $1/2$-H\"older regularity and the sharp $\log K/K$ Fourier estimate. 

Although the general perturbative philosophy is inspired by the one-dimensional Schr\"odinger setting in \cite{AloCardeO}, the proof developed here follows a fundamentally different route. Its core arguments are specific to CMV matrices and rely on structural properties of Szeg\H{o} transfer matrices, in particular their
$J$-unitary geometry (see Lemma~\ref{teclemma} ahead). Thus, the proof is not obtained by translating the self-adjoint argument to the unitary setting, but by building a distinct mechanism adapted to the CMV framework.

A key point in the proof of Theorem~\ref{thm:perturbed-periodic-cmv} is that the asymptotic comparison between the Szeg\H{o} transfer matrices of the periodic and perturbed
CMV matrices  produces a vector which depends on the spectral
parameter (see Proposition~\ref{prop:asymptotic-formula-cmv} ahead). This prevents a direct application of the
classical Bernstein--Szeg\H{o} approximation  (Theorem \ref{thm:cmv-carmona}). The $J$-unitary geometry of the Szeg\H{o} transfer matrices is precisely what allows us to overcome this difficulty and recover an appropriate periodic comparison, as developed in Lemma~\ref{teclemma} (see Section~2) and used in Section~4.

\begin{remark}
\end{remark}

\begin{enumerate}
    \item [i)] The first  conclusion in the statement of Theorem \ref{thm:perturbed-periodic-cmv} is compatible with the general perturbation theory for CMV matrices. Indeed,  polynomial decay in the statement of Theorem \ref{thm:perturbed-periodic-cmv}  implies
\[
\sum_{n\geq0}|\alpha_n^\kappa-\alpha_n|<\infty,
\]
and hence the corresponding CMV matrices differ by a trace-class operator. Classical trace-class perturbation theory yields invariance,
up to unitary equivalence, of the absolutely continuous parts \cite{SimonOPUC1,SimonOPUC2}. It does not, however, exclude singular
spectrum inside the bands or provide quantitative control of the canonical spectral measure at the band edges. Theorem~\ref{thm:perturbed-periodic-cmv}
therefore supplies information that is not contained in the standard qualitative perturbation theory. It is worth underlining that Jost and meromorphic approaches such as \cite{Kozhan2012,Kozhan2014} do not provide this type of quantitative band-edge analysis (Theorem~\ref{thm:perturbed-periodic-cmv} $(ii)$-$(iii)$), even for the unperturbed periodic problem.

    \item [ii)] Theorem~\ref{thm:perturbed-periodic-cmv} $(iii)$ is polynomially optimal. Namely, the upper and lower correlation dimensions of
$\mu^\kappa_\alpha\big|_{\mathfrak e}$, denoted by $D^{+}(\mu^\kappa_\alpha\big|_{\mathfrak e})$ and $D^{-}(\mu^\kappa_\alpha\big|_{\mathfrak e})$, govern the power-law decay of the time-averaged quantum return probability through the exponents $K^{-D^{\mp}}$ \cite{BGT,AloCarvCRdOlivro}. Moreover, since $\mu^\kappa_\alpha\big|_{\mathfrak e}$ is a finite Borel measure on the unit circle, its upper and lower correlation dimensions satisfy
\[
0\leq
D^\pm\!\left(\mu^\kappa_\alpha\big|_{\mathfrak e}\right)
\leq 1.
\]
We also note that the critical $1/2$-H\"older continuity alone, however, does not imply the sharp estimate in part~$(iii)$: the
generic bound obtained from $\alpha$-H\"older continuity via a classical Strichartz-type argument \cite{DamanikFillmanVance} is only $O(K^{-\alpha})$, which gives
$O(K^{-1/2})$ when $\alpha=1/2$. To obtain the much stronger $\log K/K$ rate, we prove a sharp Strichartz-type inequality, refining a result of \cite{DamanikFillmanVance}.

    \item [iii)] We restrict our analysis to the essential spectrum, since eigenvalues,
and therefore atomic components of the spectral measure, may occur
outside the spectral bands.

    \item [iv)] Avila and Jitomirskaya observed in Remark~4.4 in \cite{AvilaUaH} that one should expect quantitative estimates for the modulus of absolute continuity of spectral measures, with the critical exponent $1/2$ dictated by square-root singularities at spectral gap edges. To the best of our knowledge, there have been very few advances in recent decades in the understanding of this problem. We highlight \cite{AloCardeO,AvilaUaH,kreimerLS2009,LiYouZhou2026,lukic2019}. The main difficulty addressed here is the quantitative analysis of the spectral measure uniformly up to the spectral edges.

    \item[v)]  We note that quantitative regularity of naturally arising absolutely continuous measures remains a subtle problem in dynamical systems as well. For stationary measures, recent work such as \cite{GorodetskiK} establishes H\"older regularity. It is worth underlining that, in the context of physical measures, the exponent $1/2$ is expected to play a critical role. In the present spectral setting, this exponent arises naturally from square-root singularities at band edges.

\end{enumerate}

Orthogonal polynomials on the unit circle form a classical and extensively developed branch of spectral theory, with a rich theory of
periodic operators, perturbations, and asymptotic behavior; see, in particular, \cite{DamanikKillipSimon,SimonOPUC1,SimonOPUC2}. The canonical spectral measure of a half-line CMV matrix is precisely the orthogonality measure of the corresponding OPUC. Thus, Theorem~\ref{thm:perturbed-periodic-cmv} has the following direct OPUC formulation.

\begin{theorem}[OPUC formulation of
Theorem~\ref{thm:perturbed-periodic-cmv}]
\label{mainthm}
Under the assumptions of Theorem~\ref{thm:perturbed-periodic-cmv}, let
$\mu_{\alpha^\kappa}$ be the orthogonality measure on $\partial\mathbb D$
associated with the Verblunsky coefficients
\[
\alpha_n^\kappa=\alpha_n+\kappa b(n).
\]
Then, for $|\kappa|$ sufficiently small, the restriction
$\mu_{\alpha^\kappa}|_{\mathfrak e}$, where
\[
\mathfrak e=\sigma_{\rm ess}(\mathcal C_\alpha),
\]
is purely absolutely continuous and $1/2$-H\"older continuous.
Moreover,
\[
\frac1K\sum_{n=0}^{K-1}
\left|
\int_{\mathfrak e}z^{-n}\,d\mu_{\alpha^\kappa}(z)
\right|^2
\lesssim
\frac{\log K}{K},
\, \,  K\ge2.
\]
\end{theorem}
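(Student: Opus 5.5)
Theorem~\ref{mainthm} is essentially a restatement of Theorem~\ref{thm:perturbed-periodic-cmv}, so the plan is to derive it from that theorem by identifying the objects involved. The first step is the standard identification of the spectral measure with the orthogonality measure. The measure $\mu_{\alpha^\kappa}$, defined through $\langle \delta_0,\mathcal C_\kappa^k\delta_0\rangle=\int z^k\,d\mu_{\alpha^\kappa}$, is the unique probability measure on $\partial\mathbb D$ whose Verblunsky coefficients, via the Szeg\H{o} recursion, are $\{\alpha^\kappa_n\}$. This is the CMV theorem in \cite{SimonOPUC1}. Hence the orthogonality measure in Theorem~\ref{mainthm} coincides with the spectral measure in Theorem~\ref{thm:perturbed-periodic-cmv}, and $\mathfrak e=\sigma_{\rm ess}(\mathcal C_\alpha)=\sigma_{\rm ess}(\mathcal C_\kappa)$ by part (i).

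Pure absolute continuity and $1/2$-H\"older continuity of $\mu_{\alpha^\kappa}|_{\mathfrak e}$ then follow verbatim from parts (i) and (ii) of Theorem~\ref{thm:perturbed-periodic-cmv}.

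For the Fourier estimate, I would rewrite the correlation in part (iii) in terms of the measure. Set $P=\chi_{\mathfrak e}(\mathcal C_\kappa)$. By the functional calculus, $P$ is an orthogonal projection commuting with $\mathcal C_\kappa$. Therefore
\[
\langle P\delta_0,\mathcal C_\kappa^n P\delta_0\rangle=\langle \delta_0,\chi_{\mathfrak e}(\mathcal C_\kappa)\mathcal C_\kappa^n\delta_0\rangle=\int_{\mathfrak e} z^n\,d\mu_{\alpha^\kappa}(z),
\]
and $\|P\delta_0\|^2=\mu_{\alpha^\kappa}(\mathfrak e)$. Taking $\psi=P\delta_0/\|P\delta_0\|$ gives
\[
\langle\psi,\mathcal C_\kappa^n\psi\rangle=\mu_{\alpha^\kappa}(\mathfrak e)^{-1}\int_{\mathfrak e}z^n\,d\mu_{\alpha^\kappa}.
\]

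Next I would pass from $z^n$ to $z^{-n}$. Since $\mu_{\alpha^\kappa}$ is a positive measure, $\int z^{-n}d\mu=\overline{\int z^n d\mu}$ on $\partial\mathbb D$, so the moduli are equal. Multiplying the bound in (iii) by the constant $\mu_{\alpha^\kappa}(\mathfrak e)^2\le1$ then yields the stated inequality.

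The only point requiring care is that $\mu_{\alpha^\kappa}(\mathfrak e)>0$, so that $\psi$ is defined. This follows because $\mathfrak e$ is a nonempty union of arcs contained in the essential spectrum, which is the support of the absolutely continuous part, and $\delta_0$ is cyclic. I expect no substantial obstacle: all the analytic difficulty lies in Theorem~\ref{thm:perturbed-periodic-cmv} itself.
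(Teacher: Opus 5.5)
Your proposal is correct and is essentially the paper's argument. The paper presents Theorem~\ref{mainthm} as a direct restatement of Theorem~\ref{thm:perturbed-periodic-cmv}, via the identification of the $\delta_0$-spectral measure of $\mathcal C_\kappa$ with the OPUC orthogonality measure. In proving part (iii), the paper in fact obtains the measure-level bound on $\frac1K\sum_{n<K}|\int_{\mathfrak e}z^{n}\,d\mu_{\alpha^\kappa}|^2$ first, from Proposition~\ref{thmStrichartz}, and only then normalizes by $\|\chi_{\mathfrak e}(\mathcal C_\kappa)\delta_0\|$. Your passage from $\psi$ back to the measure is the same spectral-theorem computation run in reverse. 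Your check that $\mu_{\alpha^\kappa}(\mathfrak e)>0$ is a detail the paper leaves implicit; it is most cleanly justified by noting $\operatorname{supp}\mu_{\alpha^\kappa}=\sigma(\mathcal C_\kappa)\supset\mathfrak e$ by cyclicity of $\delta_0$, together with the fact that $\mathfrak e$ has nonempty interior.
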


We use the notation
\[
X\lesssim Y
\]
to mean that $X\leq \gamma Y$ for a constant $\gamma>0$ independent
of the asymptotic variables under consideration. We write
\[
X \asymp Y
\]
if \(X \lesssim Y\) and \(Y \lesssim X\).

The paper is organized as follows. In Section~2 we collect the
transfer-matrix and spectral-measure ingredients needed later,
including the Bernstein-Szeg\H{o} approximation
(Theorem~\ref{thm:cmv-carmona}), the relevant periodic Floquet theory,
and the asymptotic comparison between the periodic and perturbed
Szeg\H{o} transfer matrices (Proposition~\ref{prop:asymptotic-formula-cmv}). Section~3 is devoted to a sharp Strichartz-type (Proposition \ref{thmStrichartz}) estimate for measures with power-law singularities. In Section~4 we combine these ingredients to prove Theorem~\ref{thm:perturbed-periodic-cmv}.


\section{Szeg\H{o} transfer matrices and spectral measures}

In this work we study half-line CMV matrices acting
on $\ell^2(\mathbb N_0)$, where
$\mathbb N_0=\{0,1,2,\ldots\}$. Let
\[
\mathbb D=\{z\in\mathbb C:|z|<1\},
\, \, 
\partial\mathbb D=\{z\in\mathbb C:|z|=1\},
\]
and let $\ell$ denote normalized Lebesgue measure on $\partial\mathbb D$.
A sequence $\alpha=(\alpha_j)_{j\geq0}$ of Verblunsky coefficients
satisfies
\[
|\alpha_j|<1,
\, \, 
\rho_j=\sqrt{1-|\alpha_j|^2}. \, \, j \geq 0.
\]
The associated half-line CMV matrix $\mathcal C_\alpha$ is the
unitary operator on $\ell^2(\mathbb N_0)$ obtained from the standard
CMV construction with the canonical boundary condition at $0$.

For $z\in\mathbb C\setminus\{0\}$, the corresponding Szeg\H{o}
transfer matrix is
\[
S(\alpha,z)
=
\frac{1}{\rho}
\begin{pmatrix}
z & -\overline{\alpha}\\
-\alpha z & 1
\end{pmatrix},
\, \, 
\rho=\sqrt{1-|\alpha|^2}.
\]
We write
\[
A_\alpha(n,z)
=
S(\alpha_{n-1},z)\cdots S(\alpha_0,z),
\, \, 
A_\alpha(0,z)=I.
\]
The spectral measure of $\mathcal C_\alpha$ associated with the
canonical vector $\delta_0$ will be denoted by $\mu_\alpha$.

The following form of the Bernstein-Szeg\H{o} approximation will be
used repeatedly.

\begin{theorem}[Bernstein-Szeg\H{o} approximation]
\label{thm:cmv-carmona}
Let $\alpha$ be a sequence of Verblunsky coefficients and let
\[
u_0=\frac{1}{\sqrt2}
\begin{pmatrix}
1\\
1
\end{pmatrix}.
\]
Then, for every $f\in C(\partial\mathbb D)$,
\[
\lim_{n\to\infty}
\int_{\partial\mathbb D}
f(z)\,
\|A_\alpha(n,z)u_0\|^{-2}\,d\ell(z)
=
\int_{\partial\mathbb D}
f(z)\,d\mu_\alpha(z).
\]
\end{theorem}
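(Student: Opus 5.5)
We prove Theorem~\ref{thm:cmv-carmona} by passing through the Bernstein--Szeg\H{o} measures attached to truncated coefficient sequences. Fix $n$. Let $\alpha^{(n)}$ be the sequence with $\alpha^{(n)}_j=\alpha_j$ for $j<n$ and $\alpha^{(n)}_j=0$ for $j\geq n$, and let $\mu_n$ be its orthogonality measure. The proof has two steps: identify $\mu_n$ explicitly, then show $\mu_n\to\mu_\alpha$ weakly.

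\emph{Step 1: identify $\mu_n$.} Since $\alpha^{(n)}_j=0$ for $j\geq n$, the Szeg\H{o} recursion gives $\varphi_{j+1}=z\varphi_j$ and $\varphi^*_{j+1}=\varphi^*_j$ for $j\ge n$. The classical Bernstein--Szeg\H{o} theorem \cite{SimonOPUC1} then yields
\[
d\mu_n=\frac{d\ell}{|\varphi_n(z)|^2}.
\]
Here $\varphi_n$ is the orthonormal polynomial, which depends only on $\alpha_0,\dots,\alpha_{n-1}$; moreover $\varphi_n^*$ has no zeros in $\overline{\mathbb D}$. On $\partial\mathbb D$ we have $|\varphi_n(z)|=|\varphi_n^*(z)|$, and the normalized transfer matrices satisfy
\[
\begin{pmatrix}\varphi_n\\ \varphi_n^*\end{pmatrix}=A_\alpha(n,z)\begin{pmatrix}1\\1\end{pmatrix}.
\]
Hence, for $|z|=1$,
\[
\|A_\alpha(n,z)u_0\|^2=\tfrac12\bigl(|\varphi_n|^2+|\varphi_n^*|^2\bigr)=|\varphi_n(z)|^2 .
\]
This gives
\[
\|A_\alpha(n,z)u_0\|^{-2}\,d\ell=d\mu_n .
\]

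\emph{Step 2: weak convergence $\mu_n\to\mu_\alpha$.} The moments $\int z^k\,d\mu$ for $|k|\le n$ are determined by $\alpha_0,\dots,\alpha_{n-1}$. This follows from the Szeg\H{o} recursion, or equivalently from the CMV structure: $\langle\delta_0,\mathcal C^k\delta_0\rangle$ only involves matrix entries within distance $|k|$ of the origin, and these depend only on finitely many coefficients. Therefore
\[
\int z^k\,d\mu_n=\int z^k\,d\mu_\alpha\qquad\text{for all } |k|\le n,
\]
and so every trigonometric moment of $\mu_n$ converges to the corresponding moment of $\mu_\alpha$. All measures involved are probability measures, and trigonometric polynomials are dense in $C(\partial\mathbb D)$. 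A standard $\varepsilon/3$ argument, using $|\int f\,d\mu_n-\int P\,d\mu_n|\le\|f-P\|_\infty$ for a trigonometric polynomial $P$ approximating $f$, gives convergence for every $f\in C(\partial\mathbb D)$.

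The main point needing care is Step 1. First, one must check the precise normalization of $S(\alpha,z)$ acting on $(\varphi_n,\varphi_n^*)^T$, since the paper's matrix has the factor $1/\rho$ and the ordering $(z,-\bar\alpha;-\alpha z,1)$; this matches the recursion
\[
\rho\varphi_{n+1}=z\varphi_n-\bar\alpha_n\varphi_n^*,\qquad \rho\varphi_{n+1}^*=\varphi_n^*-\alpha_n z\varphi_n .
\]
Second, one must justify the identity $d\mu_n=d\ell/|\varphi_n|^2$. If one prefers not to cite it, it can be verified directly: show that the $\varphi_j$ are orthonormal in $L^2(d\ell/|\varphi_n^*|^2)$. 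For $j\ge n$ this follows by writing $\varphi_j=z^{j-n}\varphi_n$ and using Cauchy's theorem with $1/\varphi_n^*$ analytic in $\overline{\mathbb D}$, and then one inducts downward via the inverse Szeg\H{o} recursion.
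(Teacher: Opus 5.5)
Your proposal is correct and follows the paper's route: the paper likewise notes that $A_\alpha(n,z)u_0=\tfrac{1}{\sqrt2}(\varphi_n,\varphi_n^*)^{T}$ and $|\varphi_n|=|\varphi_n^*|$ on $\partial\mathbb D$, so that $\|A_\alpha(n,z)u_0\|^{-2}=|\varphi_n^*(z)|^{-2}$, and then cites the classical Bernstein--Szeg\H{o} approximation for OPUC. Your Steps~1 and~2 (the truncated Bernstein--Szeg\H{o} measures $d\ell/|\varphi_n|^2$, agreement of the low-order moments, and density of trigonometric polynomials) spell out the standard proof of that cited result.
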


\begin{remark} { \rm The preceding result is the transfer-matrix formulation of the
classical Bernstein-Szeg\H{o} approximation. In the OPUC notation,
one has
\[
A_\alpha(n,z)u_0
=
\frac{1}{\sqrt2}
\begin{pmatrix}
\varphi(z,n)\\
\varphi^*(z,n)
\end{pmatrix},
\]
where $\varphi(\,\cdot\,,n)$ and $\varphi^*(\,\cdot\,,n)$ are the
orthonormal polynomial and its reversed polynomial, respectively.
On $\partial\mathbb D$,
\[
|\varphi(z,n)|=|\varphi^*(z,n)|,
\]
and consequently
\[
\|A_\alpha(n,z)u_0\|^{-2}
=
|\varphi^*(z,n)|^{-2}.
\]
Thus the theorem is equivalent to the usual Bernstein-Szeg\H{o} 
approximation formula (see, e.g,  Appendix~A.2, Section~C, in~\cite{SimonOPUC1}) for the orthogonality measure on the unit
circle.}
\end{remark}

We recall some standard facts about periodic CMV matrices and Floquet theory; see, for instance, \cite{DamanikKillipSimon,SimonOPUC2,Simonmain}. Let $\alpha=\{\alpha_n\}_{n\geq0}$ be a periodic sequence with even period $p$. The periodic CMV discriminant is given by
\[
\Delta(z)= z^{-p/2}\operatorname{tr}\bigl(A_\alpha(p,z)\bigr),
\, \,  z\in\partial\mathbb D.
\]
The function $\Delta$ is real-valued on $\partial\mathbb D$, and Floquet theory yields
\[
\sigma_{\mathrm{ess}}(\mathcal C_\alpha)
=
\{z\in\partial\mathbb D: |\Delta(z)|\leq2\}.
\]
Thus, the essential spectrum is a finite union of closed arcs of the unit
circle, called spectral bands:
\[
\sigma_{\mathrm{ess}}(\mathcal C_\alpha)
=
\bigcup_{j=1}^{q} B_j.
\]
For every $z\in B_j^\circ$, there exists
$\theta_j(z)\in(0,\pi)$ such that
\[
\Delta(z)=2\cos\theta_j(z).
\]
The endpoints of the spectral bands satisfy $|\Delta(z)|=2$. Note that there is no loss of generality in assuming that the period \(p\) is even, since a sequence with odd period (p) is also periodic with period \(2p\).

We next record a remark concerning the periodic setting and the subsequent comparison argument.

\begin{remark}\label{Remark} {\rm For periodic Verblunsky coefficients, the absolutely continuous part of the spectral measure admits a useful representation. The family
$\{\mu_{\lambda\alpha}\}_{\lambda\in\partial\mathbb D}$ may be viewed as an Aleksandrov family. If $f$ denotes the Schur function associated with the
sequence $\alpha$, then the Schur function corresponding to
$\lambda\alpha$ satisfies
\[
    f_{\lambda}(z)=\lambda f(z),
\]
and therefore the corresponding Carath\'eodory function is
\[
    F_{\lambda}(z)
    =
    \frac{1+\lambda z f(z)}
         {1-\lambda z f(z)}.
\]
Hence, for almost every $z\in\partial\mathbb D$,
\[
    \frac{d\mu_{\lambda\alpha,\mathrm{ac}}}{d\ell}(z)
    =
    \Re F_{\lambda}(z)
    =
    \frac{1-|f(z)|^2}
         {|1-\lambda z f(z)|^2}.
\]
This representation is particularly convenient because the dependence on $\lambda\in\partial\mathbb D$ is explicit; see the discussion of Aleksandrov measures in Subsection~1.3.9 in \cite{SimonOPUC1}.}
\end{remark}

\begin{lemma}
\label{teclemma}
Let $\alpha$ be periodic, let
\[
\mathfrak e=\sigma_{\rm ess}(\mathcal C_\alpha),
\, \, 
J=
\begin{pmatrix}
1&0\\
0&-1
\end{pmatrix},
\]
and let
\[
u:\mathfrak e\longrightarrow\mathbb C^2
\]
be continuous and satisfy
\[
\|u(z)\|=1,
\, \, 
u(z)^*Ju(z)=0,
\, \, 
z\in\mathfrak e.
\]
Then there exists a nonnegative density
\[
\rho_{\alpha,u}\in L^1(\mathfrak e,d\ell)
\]
such that, for every
$g\in C_c(\mathfrak e^\circ)$,
\[
\lim_{n\to\infty}
\int_{\partial\mathbb D}
g(z)\,
\|A_\alpha(n,z)u(z)\|^{-2}\,d\ell(z)
=
\int_{\partial\mathbb D}
g(z)\rho_{\alpha,u}(z)\,d\ell(z).
\]
Moreover, $\rho_{\alpha,u}$ is bounded on compact subsets of
$\mathfrak e^\circ$ and, at every band edge, has at most a
square-root singularity.
\end{lemma}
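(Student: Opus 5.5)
Write $n=mp+r$ with $0\le r<p$ and factor
\[
A_\alpha(n,z)=A_\alpha(r,z)\,T(z)^m,\qquad T(z)=A_\alpha(p,z),
\]
using periodicity. The factors $A_\alpha(r,z)$, $0\le r<p$, are finitely many matrices that are continuous and invertible, uniformly in $z\in\mathfrak e$. The proof therefore reduces to the asymptotics of $\|A_\alpha(r,z)T(z)^m u(z)\|^{-2}$, and I would diagonalize $T(z)$ by Floquet theory.

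For $z\in\mathfrak e^\circ$ we have $\Delta(z)=2\cos\theta(z)$ with $\theta\in(0,\pi)$. Up to the unimodular factor $z^{p/2}$, the matrix $T(z)$ has distinct eigenvalues $e^{\pm i\theta}$. Write $T(z)=z^{p/2}P(z)\,\mathrm{diag}(e^{i\theta},e^{-i\theta})P(z)^{-1}$, with $P$ continuous (indeed real-analytic) on each open band. Expanding $P(z)^{-1}u(z)=(c_+(z),c_-(z))^T$ gives
\[
A_\alpha(n,z)u(z)=z^{mp/2}A_\alpha(r,z)\bigl(c_+e^{im\theta}v_++c_-e^{-im\theta}v_-\bigr),
\]
so
\[
\|A_\alpha(n,z)u(z)\|^2=a_r(z)+2\Re\bigl(b_r(z)e^{2im\theta(z)}\bigr)
\]
with $a_r,b_r$ continuous on $\mathfrak e^\circ$. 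The $J$-unitarity of Szegő matrices enters here. Since $S(\alpha,z)^*JS(\alpha,z)=J$ on $\partial\mathbb D$, the transfer matrices preserve the isotropic cone $\{u^*Ju=0\}$. Inside the bands, the Floquet vectors $v_\pm$ are non-isotropic with $v_+^*Jv_+$ and $v_-^*Jv_-$ of opposite signs, since $T$ is $J$-unitary and elliptic. Because $u$ is isotropic and nonzero, both $c_\pm\ne0$, and the quadratic form $w\mapsto\|A_\alpha(r,z)w\|^2$ stays strictly positive. Hence $a_r>2|b_r|$ on $\mathfrak e^\circ$, locally uniformly.

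Next I would pass to the limit. The function $\theta$ is smooth with $\theta'\ne0$ on $\mathfrak e^\circ$, since $\Delta'\neq0$ inside the bands. I therefore expand
\[
(a_r+2\Re(b_re^{2im\theta}))^{-1}=\sum_k \hat h_{r,k}(z)e^{2ikm\theta(z)},
\]
a Fourier series in the phase converging absolutely and locally uniformly because $a_r>2|b_r|$. The Riemann--Lebesgue lemma kills the oscillating terms against $g\in C_c(\mathfrak e^\circ)$. What remains is
\[
\rho_r(z)=\frac{1}{2\pi}\int_0^{2\pi}\frac{dt}{a_r+2\Re(b_re^{it})}=\bigl(a_r^2-4|b_r|^2\bigr)^{-1/2}.
\]
Each $\rho_r$ is bounded on compacts of $\mathfrak e^\circ$. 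To get a limit independent of $r$, I would check $\rho_r=\rho_0$. This should follow because $a_r^2-4|b_r|^2$ is essentially $|\det|^2$ of the Gram data of $A_\alpha(r,z)(c_+v_+,c_-v_-)$. Via $J$-unitarity, $\rho_r$ can also be written as the invariant quantity $|c_+|^2|v_+^*Jv_+|\cdot|c_-|^2|v_-^*Jv_-|$ combination, up to normalization, which does not depend on $r$. An alternative is to express it through the Aleksandrov density of Remark~\ref{Remark}, which is manifestly independent of the subsequence. If this invariance fails, I would note that one may take $\rho_{\alpha,u}$ defined along $n\to\infty$ only after checking all residues agree, so the identity is the needed step.

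The main obstacle is the band-edge behaviour. At an edge $z_0$, $|\Delta|=2$ and $T(z_0)$ is generically a nontrivial Jordan block. There $P$ degenerates, $\theta\sim c\sqrt{|z-z_0|}$, and $v_+,v_-$ merge. I would estimate $a_r^2-4|b_r|^2$ from below. The invariant form above is proportional to $\sin^2\theta\cdot(\text{continuous, nonvanishing factor})/|\det P|^2$-type quantities. Normalizing the eigenvectors continuously up to the edge, which is possible for $T$ with off-diagonal entry nonzero, one has $|\det P|\asymp|\sin\theta|\asymp\sqrt{|z-z_0|}$, while the numerator is bounded below by isotropy of $u$. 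This gives $\rho_{\alpha,u}(z)\lesssim|z-z_0|^{-1/2}$, the claimed square-root singularity, which is in particular integrable, so $\rho_{\alpha,u}\in L^1(\mathfrak e,d\ell)$. In the degenerate case of a closed gap, where $T(z_0)=\pm z_0^{p/2}I$, the eigenvectors extend analytically, and I expect $\rho$ to be bounded there.
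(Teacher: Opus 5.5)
Your interior argument, factoring $A_\alpha(n,z)=A_\alpha(r,z)T(z)^m$ and averaging the phase $e^{2im\theta}$ by Riemann--Lebesgue, is a legitimate alternative to the paper's route. The paper uses $S(\alpha,z)D_\lambda=D_\lambda S(\overline{\lambda}\alpha,z)$ to reduce each fixed isotropic direction to Bernstein--Szeg\H{o} for the rotated sequence $\overline{\lambda}\alpha$. It then approximates $\lambda(z)$ by step functions and reads off the Aleksandrov density $(1-|f|^2)/|1-\overline{\lambda}zf|^2$. Your version yields an explicit Floquet formula, but you must prove $r$-independence by hand. That step, on which you hedge, is true and short. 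Set $w_\pm=c_\pm A_\alpha(r,z)v_\pm$. Then $a_r^2-4|b_r|^2=(\|w_+\|^2-\|w_-\|^2)^2+4|\det(w_+,w_-)|^2$. Isotropy of $u$ gives $w_+^*Jw_-=0$ and $w_+^*Jw_+=-w_-^*Jw_-\neq0$, and these force $\|w_+\|=\|w_-\|$. (Write $w_+=(p_1,p_2)$ and $w_-=\mu J(-\overline{p_2},\overline{p_1})$; the second condition gives $|\mu|=1$.) Hence $a_r^2-4|b_r|^2=4|c_+c_-|^2|\det P|^2$ does not depend on $r$, and $\rho_{\alpha,u}=(2|c_+c_-|\,|\det P|)^{-1}$.

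The genuine gap is the band-edge estimate, which you rightly call the main obstacle. You claim that, after normalizing the eigenvectors, the remaining numerator is bounded below \emph{by isotropy of $u$}. This is false. Taken literally, together with $|\det P|\asymp|\sin\theta|$, it would even make $\rho_{\alpha,u}$ bounded near the edge, which fails in general. With unit vectors $\hat v_\pm$ your formula reads $\rho_{\alpha,u}=|\det(\hat v_+,\hat v_-)|/(2|\det(u,\hat v_-)|\,|\det(\hat v_+,u)|)$. At an open-gap edge $z_0$ both $\hat v_\pm$ converge to the eigenvector $v_0$ of the Jordan block $T(z_0)$, and $v_0$ is itself isotropic: from $Tv_0=\lambda v_0$, $Tw=\lambda w+v_0$, $|\lambda|=1$ and $T^*JT=J$ one gets $v_0^*Jv_0=0$. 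So the admissible constant choice $u\equiv v_0/\|v_0\|$ gives $|\det(u,\hat v_\pm)|=O(|z-z_0|^{1/2})$, and hence $\rho_{\alpha,u}\gtrsim|z-z_0|^{-1/2}$. Isotropy does not keep $u$ away from the merging Floquet lines.

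What you need is a lower bound on $|c_\pm|$, not on $|\det(u,\hat v_\pm)|$. Since $\hat v_+^*J\hat v_-=0$, the projective coordinates $x_2/x_1$ of $\hat v_\pm$ are mirror images in the unit circle, which is exactly the isotropic set. In the chordal metric $d(x,y)=|\det(x,y)|$ this gives $d(u,\hat v_+)=d(u,\hat v_-)\geq\frac12 d(\hat v_+,\hat v_-)$, i.e.\ $|c_\pm|\geq\frac12$. Therefore $\rho_{\alpha,u}\leq 2/|\det(\hat v_+,\hat v_-)|\asymp(4-\Delta^2)^{-1/2}\asymp|z-z_0|^{-1/2}$, uniformly in isotropic $u$. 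This is the counterpart of the paper's bound $|1-\overline{\lambda}zf|\geq1-|f|$ combined with $1-|f|^2\asymp\sqrt{4-\Delta^2}$.
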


\begin{proof} It suffices to consider $g\in C_c(B_j^\circ)$ for a fixed open band $B_j^\circ$, since the general case $g\in C_c(\mathfrak e^\circ)$ follows by decomposing $g$ into a finite sum of functions supported in the open bands. Set
\[
K=\operatorname{supp}g\Subset B_j^\circ.
\]
By the standard Floquet theory for periodic Verblunsky coefficients (see, e.g., Chapter~11 in \cite{SimonOPUC2}), there exists $\gamma_K>1$ such
that
\begin{equation}\label{eq:uniform-periodic-bounds}
\gamma_K^{-1}
\leq
\|A_\alpha(n,z)v\|
\leq
\gamma_K,
\, \, 
n\geq0,\ z\in K,\ \|v\|=1.
\end{equation}

Since $u(z)^*Ju(z)=0$ and $\|u(z)\|=1$, we have
\[
|u_1(z)|=|u_2(z)|=\frac1{\sqrt2}.
\]
Hence, we can write 
\[
u(z)=\eta(z)u_{\lambda(z)} 
:= \eta(z)\frac1{\sqrt2}
\begin{pmatrix}
1\\
\lambda(z)
\end{pmatrix},
\, \,  |\eta(z)|=|\lambda(z)|=1,
\]
where $\eta,\lambda:K\to\partial\mathbb D$ are given by
$\eta(z)=\sqrt2\,u_1(z)$ and
$\lambda(z)=\displaystyle\frac{u_2(z)}{u_1(z)}$, respectively, and are continuous.

Since $|\eta(z)|=1$, we have
\[
\|A_\alpha(n,z)u(z)\|
=
\|A_\alpha(n,z)u_{\lambda(z)}\|.
\]

We first consider a fixed $\lambda\in\partial\mathbb D$. Let
\[
D_\lambda=
\begin{pmatrix}
1&0\\
0&\lambda
\end{pmatrix}.
\]
A direct calculation gives
\[
S(\alpha,z)D_\lambda
=
D_\lambda S(\overline{\lambda}\alpha,z),
\]
and therefore
\begin{equation}\label{eq:phase-transfer}
A_\alpha(n,z)u_\lambda
=
D_\lambda
A_{\overline{\lambda}\alpha}(n,z)u_0.
\end{equation}
Since $D_\lambda$ is unitary,
\[
\|A_\alpha(n,z)u_\lambda\|
=
\|A_{\overline{\lambda}\alpha}(n,z)u_0\|.
\]
Theorem~\ref{thm:cmv-carmona} thus yields
\begin{equation}\label{eq:fixed-lambda-limit}
\|A_\alpha(n,z)u_\lambda\|^{-2}\,d\ell(z)
\overset{w}{\longrightarrow}
d\mu_{\overline{\lambda}\alpha}(z).
\end{equation}

Note also that, for every $p \in \mathbb{N}$, \eqref{eq:phase-transfer} implies
\[
A_{\overline{\lambda}\alpha}(p,z)
=
D_\lambda^{-1}A_\alpha(p,z)D_\lambda,
\]
so that
\[
\Delta_{\overline{\lambda}\alpha}(z)=\Delta_\alpha(z).
\]
Hence, all the measures $d\mu_{\overline{\lambda}\alpha}$ have the same
spectral bands. By the standard periodic CMV theory (see Remark \ref{Remark} above), on each band one may write
\begin{equation*}
d\mu_{\overline{\lambda}\alpha}(z)
=
\frac{1-|f(z)|^2}
         {|1-\overline{\lambda} z f(z)|^2} \, d \ell (z),
\, \,  z\in B_j^\circ.
\end{equation*}
These densities have singularities of order at most $1/2$ at open band edges; closed-gap edges produce no worse singularity.

We now approximate the continuous function $\lambda$ uniformly on $K$
by step functions
\[
\lambda_N(z)
=
\sum_{m=1}^{M_N}\lambda_m^{(N)}
\chi_{I_m^{(N)}}(z),
\, \, 
|\lambda_m^{(N)}|=1,
\]
and set $u_N(z)=\eta(z) u_{\lambda_N(z)},$ where $\eta$ was previously defined. Then
\[
\sup_{z\in K}\|u_N(z)-u(z)\|\longrightarrow0 \text{ as } N \to \infty.
\]

For each fixed $N$, \eqref{eq:fixed-lambda-limit} applied on the finitely many arcs $I_m^{(N)}$ gives

\begin{eqnarray*}
\lim_{n\to\infty}
\int_{\partial\mathbb D}
g(z)\|A_\alpha(n,z)u_N(z)\|^{-2}\,d\ell(z) &=&  \lim_{n\to\infty}
\int_{\partial\mathbb D}
g(z)\|A_\alpha(n,z)u_{\lambda_N(z)}\|^{-2}\,d\ell(z)
\\&=&
\int_{\partial\mathbb D}
g(z)\rho_{\alpha,u_N}(z)\,d\ell(z),    
\end{eqnarray*}
where
\[
\rho_{\alpha,u_N}(z)
=
\frac{1-|f(z)|^2}
         {|1-\overline{\lambda_N}(z) z f(z)|^2}.
\]
Here one may use the weak convergence \eqref{eq:fixed-lambda-limit} with $f\chi_{I_m^{(N)}}$, since its only discontinuities are the endpoints of the arcs and the periodic limiting measures are absolutely continuous in $B_j^\circ$.

On the other hand, by \eqref{eq:uniform-periodic-bounds},
\[
\left|
\|A_\alpha(n,z)u_N(z)\|^{-2}
-
\|A_\alpha(n,z)u(z)\|^{-2}
\right|
\le
2\gamma_K^6\|u_N(z)-u(z)\|.
\]
Hence, the uniform convergence $u_N\to u$ implies
\begin{equation}\label{limuni}
\sup_{z\in K}
\left|
\|A_\alpha(n,z)u_N(z)\|^{-2}
-
\|A_\alpha(n,z)u(z)\|^{-2}
\right|
\longrightarrow 0    
\end{equation}
as $N \to \infty$ uniform in $n.$  We also have, for every $z \in K$,
\[
\rho_{\alpha,u_N} (z) \longrightarrow\rho_{\alpha,u} (z)\, \, \text{as} \, N \to \infty,
\]
where
\[
\rho_{\alpha,u}(z)
=
\frac{1-|f(z)|^2}
         {|1-\overline{\lambda}(z) z f(z)|^2}.
\]
Passing first to the limit in $n$ and then in $N$ and applying the dominated convergence theorem, we have
\[
\lim_{n\to\infty}
\int_{\partial\mathbb D}
g(z)\|A_\alpha(n,z)u(z)\|^{-2}\,d\ell(z)
=
\int_{\partial\mathbb D}
g(z)\rho_{\alpha,u}(z)\,d\ell(z),
\]
where above we are using the fact that the limit \eqref{limuni}, as $N\to\infty$, is uniform in $n$. It remains to control the behavior at the band edges. Let $z_0$ be an open
band edge. Since
\[
\rho_{\alpha,u}(z)
=
\frac{1-|f(z)|^2}{|1-\overline{\lambda}(z)zf(z)|^2},
\]
and $|z|=|\overline{\lambda}(z)|=1$, we have
\[
|1-\overline{\lambda}(z)zf(z)|\geq 1-|f(z)|.
\]
Hence,
\[
\rho_{\alpha,u}(z)
\leq
\frac{1+|f(z)|}{1-|f(z)|}.
\]
For periodic Verblunsky coefficients,
\[
1-|f(z)|
\asymp
\sqrt{4-\Delta(z)^2}
\]
near every open band edge. Namely, using the Schur--Carath\'eodory relation
\[
zf(z)=\frac{F(z)-1}{F(z)+1},
\]
we obtain
\[
1-|f(z)|^2
=
\frac{4\Re F(z)}{|F(z)+1|^2}.
\]
For periodic Verblunsky coefficients, the quadratic equation satisfied by $F$ has discriminant $4(\Delta^2-4)$; see
\cite[Theorem~11.3.2 and its proof, pp.~726--727]{SimonOPUC2}. The resulting periodic formulas imply, in both the resonant and non-resonant cases,
\[
1-|f(z)|^2 = \frac{4\Re F(z)}{|F(z)+1|^2}\asymp \sqrt{4-\Delta(z)^2}.
\]
Since
\[
1-|f(z)|
=
\frac{1-|f(z)|^2}{1+|f(z)|},
\]
and $1\leq 1+|f(z)|\leq 2$, for every $z$,
\[
\rho_{\alpha,u}(z)
\lesssim
\frac{1}{\sqrt{4-\Delta(z)^2}}
\lesssim
|z-z_0|^{-1/2}.
\]
This proves the theorem.
\end{proof}

Next, we present the quantitative transfer-matrix comparison that will be used in the proof of the main result. The main ingredient is an a priori bound on the perturbed transfer
matrices, obtained by combining a telescoping argument with a discrete Gronwall inequality. This is analogous in spirit to the argument used
for perturbed Schr\"odinger cocycles in \cite{DamanikImp}; see also Theorem 2.2 in \cite{AloCardeO}.

Throughout the remainder of this section, we denote
\[
\mathfrak e=\sigma_{\mathrm{ess}}(\mathcal C_\alpha),
\]
where $\alpha=\{\alpha_n\}_{n\geq 0}\subset\mathbb D$ is periodic.

We begin with a standard consequence of Floquet theory \cite{SimonOPUC1,SimonOPUC2}.

\begin{lemma}\label{lem:periodic-linear-growth}
Let $\alpha=\{\alpha_n\}_{n\geq 0}\subset\mathbb D$ be periodic.
There exists a constant $C_0>0$ such that, for every
$0\leq m\leq n$ and every $z\in\mathfrak e$,
\[
\left\|
A_\alpha(n,z)A_\alpha(m,z)^{-1}
\right\|
\leq C_0(1+n-m).
\]
In particular,
\[
\|A_\alpha(n,z)\|
+
\|A_\alpha(n,z)^{-1}\|
\leq C_0(n+1),
\, \, n\geq 0,\, \,  z\in\mathfrak e.
\]
\end{lemma}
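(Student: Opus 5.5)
The plan is to reduce everything to powers of the monodromy matrix $T(z)=A_\alpha(p,z)$ and control these through the discriminant.

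\textbf{Step 1: reduction to the monodromy.} Write $n=kp+r$ and $m=jp+s$ with $0\le r,s<p$. By periodicity,
\[
A_\alpha(n,z)A_\alpha(m,z)^{-1}=A_\alpha(r,z)\,T(z)^{k-j}\,A_\alpha(s,z)^{-1}.
\]
Each $S(\alpha_i,z)$ has norm at most $2/\rho_i$ and determinant $z$, so $\|S(\alpha_i,z)^{\pm1}\|\le 2/\rho_i$ on $\partial\mathbb D$. Since $\alpha$ takes finitely many values in $\mathbb D$, $\inf_i\rho_i>0$. Hence $\|A_\alpha(r,z)\|$ and $\|A_\alpha(s,z)^{-1}\|$ are bounded uniformly for $z\in\partial\mathbb D$ and $0\le r,s<p$. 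Because $|k-j|\le 1+(n-m)/p$, it suffices to prove
\[
\|T(z)^N\|\le C(1+N),\qquad N\ge 0,\ z\in\mathfrak e.
\]

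\textbf{Step 2: normalize the monodromy.} Set $M(z)=z^{-p/2}T(z)$. Then $\det M=z^{-p}\det T=z^{-p}z^{p}=1$ and $\operatorname{tr}M=\Delta(z)\in[-2,2]$ on $\mathfrak e$. Since $|z^{-p/2}|=1$, we have $\|T^N\|=\|M^N\|$.

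\textbf{Step 3: Cayley--Hamilton.} For $M\in SL(2,\mathbb C)$ with trace $2\cos\theta$,
\[
M^N=U_{N-1}(\Delta/2)\,M-U_{N-2}(\Delta/2)\,I,
\]
where $U_k$ are the Chebyshev polynomials of the second kind. For $x\in[-1,1]$,
\[
|U_k(x)|=\left|\frac{\sin((k+1)\theta)}{\sin\theta}\right|\le k+1.
\]
This gives
\[
\|M^N\|\le N\,\|M\|+(N-1)\le (1+\sup_{\mathfrak e}\|T\|)(N+1).
\]
Here $\sup_{\mathfrak e}\|T\|<\infty$ because $T$ is a polynomial matrix and $\mathfrak e$ is compact.

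\textbf{Step 4: assemble.} Combining with Step 1 yields the bound $C_0(1+n-m)$. The second claim is the case $m=0$ for $A_\alpha(n,z)$. For the inverse, note that $A_\alpha(n,z)^{-1}$ is the same kind of product of the inverse factors. Alternatively, use $\|B^{-1}\|=\|B\|/|\det B|$ for $2\times2$ matrices together with $|\det A_\alpha(n,z)|=|z|^n=1$.

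\textbf{Main obstacle.} The only delicate point is uniformity up to the band edges, where $|\Delta|=2$ and $M$ may be a nontrivial Jordan block, so no diagonalization with uniform constants is available. The Chebyshev identity in Step 3 avoids diagonalization altogether and gives the linear bound uniformly. This also explains why linear growth is sharp at the edges.
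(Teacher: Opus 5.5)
Your proof is correct. The paper gives no argument for this lemma; it states it as a ``standard consequence of Floquet theory'' and cites Simon's monographs.

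The Floquet route one would extract from those references diagonalizes the normalized monodromy $z^{-p/2}A_\alpha(p,z)$, whose eigenvalues are $e^{\pm i\theta}$, inside the bands, and uses the Jordan form at the edges. That gives the right growth at each point. However, the diagonalizing matrices degenerate as $\sin\theta\to0$, so making $C_0$ uniform over all of $\mathfrak e$ needs an extra argument.

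Your argument avoids diagonalization entirely. You combine the periodic factorization $A_\alpha(n,z)A_\alpha(m,z)^{-1}=A_\alpha(r,z)T^{k-j}A_\alpha(s,z)^{-1}$ with the Cayley--Hamilton/Chebyshev identity and the bound $|U_k|\le k+1$ on $[-1,1]$. This produces an explicit constant depending only on $p$ and $\inf_j\rho_j$. For the second display, enlarge $C_0$ to absorb the factor $2$ coming from $\|A_\alpha(n,z)^{-1}\|=\|A_\alpha(n,z)\|$.

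The same identity, used with $|U_k(\cos\theta)|\le 1/\sin\theta$, also gives $\sup_n\|A_\alpha(n,z)\|\lesssim (1-\Delta(z)^2/4)^{-1/2}$. This yields the two-sided bound \eqref{eq:uniform-periodic-bounds} on compact sets where $|\Delta|<2$. That bound is the other Floquet fact the paper quotes without proof, in Lemma~\ref{teclemma}.
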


\begin{lemma}\label{lem:perturbed-linear-growth}
Let $\alpha=\{\alpha_n\}_{n\geq 0}\subset\mathbb D$ be periodic, and let
$b:\mathbb N_0\to\mathbb C$ satisfy
\[
M_1(b):=
\sum_{n=0}^{\infty}(n+1)|b(n)|<\infty.
\]
For
\[
\alpha_n^\kappa=\alpha_n+\kappa b(n),
\, \, n\geq 0,
\]
let $A_{\alpha^\kappa}(n,z)$ denote the corresponding Szeg\H{o}
transfer matrix. Then there exists $\varepsilon>0$ such that, for every
$|\kappa|<\varepsilon$, there exists $C>0$ satisfying
\[
\|A_{\alpha^\kappa}(n,z)\|
+
\|A_{\alpha^\kappa}(n,z)^{-1}\|
\leq C (n+1),
\, \, n\geq 0,\, \,  z\in\mathfrak e,
\]
where $C$ can be chosen uniformly for $\kappa$ in a sufficiently small neighborhood of the origin.
\end{lemma}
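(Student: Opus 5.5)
We compare the perturbed cocycle with the periodic one through a variation-of-constants (telescoping) identity, and then close the estimate with a discrete Gronwall inequality. Write $S_j(z)=S(\alpha_j,z)$ and $S_j^\kappa(z)=S(\alpha_j^\kappa,z)$. Set
\[
\Phi(n,m;z)=A_\alpha(n,z)A_\alpha(m,z)^{-1}
\]
for the periodic propagator from $m$ to $n$, which by Lemma~\ref{lem:periodic-linear-growth} satisfies $\|\Phi(n,m;z)\|\le C_0(1+n-m)$ uniformly in $z\in\mathfrak e$. The telescoping identity is
\[
A_{\alpha^\kappa}(n,z)=A_\alpha(n,z)+\sum_{j=0}^{n-1}\Phi(n,j+1;z)\bigl(S^\kappa_j(z)-S_j(z)\bigr)A_{\alpha^\kappa}(j,z),
\]
which one checks by expanding
\[
A_{\alpha^\kappa}(n)-A_\alpha(n)=\sum_j \Phi(n,j+1)\bigl(A_{\alpha^\kappa}(j+1)-S_jA_{\alpha^\kappa}(j)\bigr).
\]

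\textbf{Step 1 (entrywise Lipschitz bound).} For $|z|=1$ we need
\[
\|S(\alpha',z)-S(\alpha,z)\|\le L\,|\alpha'-\alpha|
\]
whenever $|\alpha|,|\alpha'|\le\beta'<1$. This holds because the entries of $S$ are smooth in $(\alpha,\bar\alpha)$ on that closed disc and $|z|=1$. We choose $\varepsilon$ so that $\varepsilon\|b\|_\infty<\beta'-\sup_n|\alpha_n|$; then all $\alpha^\kappa_n$ remain in the disc of radius $\beta'$, and we get
\[
\|S^\kappa_j-S_j\|\le L|\kappa||b(j)|.
\]

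\textbf{Step 2 (Gronwall).} Set $x_n=\|A_{\alpha^\kappa}(n,z)\|/(n+1)$. Inserting Lemma~\ref{lem:periodic-linear-growth} into the identity, and using $(1+n-j-1)(j+1)\le(n+1)(j+1)$, we obtain
\[
x_n\le C_0+C_0L|\kappa|\sum_{j=0}^{n-1}(j+1)|b(j)|\,x_j .
\]
The discrete Gronwall inequality then gives
\[
x_n\le C_0\prod_{j}\bigl(1+C_0L|\kappa|(j+1)|b(j)|\bigr)\le C_0\exp\bigl(C_0L|\kappa|M_1(b)\bigr).
\]
This bound is finite and uniform in $z\in\mathfrak e$, $n$, and $|\kappa|<\varepsilon$. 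Strictly, Gronwall only needs $M_1(b)<\infty$; smallness of $\kappa$ is used solely to keep the coefficients inside $\mathbb D$ uniformly.

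\textbf{Step 3 (inverses).} For the inverse we do not redo the Gronwall argument. Instead we use that $\det S(\alpha,z)=z$, so on $\partial\mathbb D$ each factor has $|\det|=1$. For a $2\times2$ matrix $M$ one has $\|M^{-1}\|=\|M\|/|\det M|$. Hence
\[
\|A_{\alpha^\kappa}(n,z)^{-1}\|=\|A_{\alpha^\kappa}(n,z)\|,
\]
and the bound for the inverse follows immediately. If one prefers, the backward telescoping identity with $A^{-1}$ also works by the same argument.

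The only genuinely delicate point is arranging the Gronwall weight so that it is summable. The linear growth of the periodic propagator costs one factor $(n-j)$, and normalizing by $(n+1)$ converts the remaining $(j+1)$ into the moment $M_1(b)$. This is exactly why the hypothesis is the first moment $\sum(n+1)|b(n)|<\infty$. The constants depend only on $C_0$, $L(\beta')$, $\varepsilon$ and $M_1(b)$, which gives the asserted uniformity in $\kappa$.
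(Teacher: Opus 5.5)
Your proposal is correct and follows essentially the same route as the paper. It uses the same telescoping identity with the periodic propagator, the bound of Lemma~\ref{lem:periodic-linear-growth}, normalization by $(n+1)$ to produce the summable weight $(j+1)|b(j)|$, the discrete Gronwall inequality, and $|\det A_{\alpha^\kappa}(n,z)|=1$ on $\partial\mathbb D$ to obtain $\|A_{\alpha^\kappa}(n,z)^{-1}\|=\|A_{\alpha^\kappa}(n,z)\|$.
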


\begin{proof} Set
\[
B_j(z)=S(\alpha_j,z),
\, \, 
C_j(z)=S(\alpha_j^\kappa,z).
\]
Since $\sup_j|\alpha_j|<1$, by choosing $\varepsilon>0$ sufficiently
small we may assume that
\[
\sup_{j\geq 0}|\alpha_j^\kappa|\leq r<1
\]
for every $|\kappa|<\varepsilon$. Hence, there exists a constant
$C_1>0$, independent of $j$, $z$, and $\kappa$, such that
\begin{equation}\label{eq:one-step-polynomial}
\|C_j(z)-B_j(z)\|
\leq C_1|\kappa||b(j)|,
\, \, j\geq 0,\, \,  z\in\partial\mathbb D.
\end{equation}

Using the telescoping identity in the form
\[
A_{\alpha^\kappa}(n,z)
=
A_\alpha(n,z)
+
\sum_{j=0}^{n-1}
A_\alpha(n,z)A_\alpha(j+1,z)^{-1}
\bigl(C_j(z)-B_j(z)\bigr)
A_{\alpha^\kappa}(j,z),
\]
Lemma~\ref{lem:periodic-linear-growth} and
\eqref{eq:one-step-polynomial} give
\[
\|A_{\alpha^\kappa}(n,z)\|
\leq
C_0(n+1)
+
C_2|\kappa|
\sum_{j=0}^{n-1}
(n-j)|b(j)|
\|A_{\alpha^\kappa}(j,z)\|.
\]
Define
\[
X_n(z):=
\frac{\|A_{\alpha^\kappa}(n,z)\|}{n+1}.
\]
Since
\[
\frac{n-j}{n+1}\leq 1,
\]
we obtain
\[
X_n(z)
\leq
C_0
+
C_2|\kappa|
\sum_{j=0}^{n-1}
(j+1)|b(j)|X_j(z).
\]
The discrete Gronwall inequality therefore yields
\[
X_n(z)
\leq
C_0
\exp\left(
C_2|\kappa|
\sum_{j=0}^{\infty}(j+1)|b(j)|
\right) \leq
C_0
\exp\left(
C_2 \epsilon
\sum_{j=0}^{\infty}(j+1)|b(j)|
\right).
\]
Hence,
\[
\|A_{\alpha^\kappa}(n,z)\|
\leq C (n+1),
\]
uniformly in $n\geq 0$ and $z\in\mathfrak e$.

Finally, for $|z|=1$, the Szeg\H{o} transfer matrices have determinant
of modulus one. Hence, for the operator norm on $2\times2$ matrices,
\[
\|A_{\alpha^\kappa}(n,z)^{-1}\|
=
\|A_{\alpha^\kappa}(n,z)\|,
\]
and the result follows.
\end{proof}

\begin{proposition}\label{prop:asymptotic-formula-cmv}
Let $\alpha=\{\alpha_n\}_{n\geq 0}\subset\mathbb D$ be periodic and set
\[
\mathfrak e=\sigma_{\mathrm{ess}}(\mathcal C_\alpha).
\]
Assume that $b:\mathbb N_0\to\mathbb C$ satisfies
\begin{equation}\label{eq:second-moment-b}
M_2(b):=
\sum_{n=0}^{\infty}(n+1)^2|b(n)|<\infty.
\end{equation}
For
\[
\alpha_n^\kappa=\alpha_n+\kappa b(n),
\, \, n\geq 0,
\]
let $A_\alpha(n,z)$ and $A_{\alpha^\kappa}(n,z)$ denote the
corresponding Szeg\H{o} transfer matrices.

Then there exists $\varepsilon>0$ such that, for every
$|\kappa|<\varepsilon$, there exist continuous matrix-valued functions
\[
Q,P:\mathfrak e\longrightarrow M_2(\mathbb C)
\]
such that, for every $n\geq 0$ and $z\in\mathfrak e$,
\begin{equation}\label{eq:poly-comparison-I}
A_\alpha(n,z)-A_{\alpha^\kappa}(n,z)
=
A_\alpha(n,z)Q(z)+R_n(z),
\end{equation}
and
\begin{equation}\label{eq:poly-comparison-II}
A_{\alpha^\kappa}(n,z)-A_\alpha(n,z)
=
A_{\alpha^\kappa}(n,z)P(z)+K_n(z).
\end{equation}
Moreover, there exists a constant $C>0$, independent of $n$, $z$, and
$\kappa$, such that
\begin{equation*}\label{eq:QP-poly}
\|Q(z)\|+\|P(z)\|
\leq C|\kappa|M_2(b),
\, \, z\in\mathfrak e,
\end{equation*}
and
\begin{equation*}\label{eq:remainder-poly}
\|R_n(z)\|+\|K_n(z)\|
\leq
C|\kappa|(n+1)
\sum_{j=n}^{\infty}(j+1)^2|b(j)|.
\end{equation*}
\end{proposition}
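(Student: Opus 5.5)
The plan is to reuse the telescoping (Duhamel) identity from the proof of Lemma~\ref{lem:perturbed-linear-growth}, this time keeping track of where the sum converges, and to define $Q$ and $P$ as the full infinite sums. Write $B_j(z)=S(\alpha_j,z)$ and $C_j(z)=S(\alpha_j^\kappa,z)$, and note that $\|C_j(z)-B_j(z)\|\le C_1|\kappa||b(j)|$ on $\partial\mathbb D$. The identity
\[
A_{\alpha^\kappa}(n,z)-A_\alpha(n,z)=\sum_{j=0}^{n-1}A_\alpha(n,z)A_\alpha(j+1,z)^{-1}\bigl(C_j(z)-B_j(z)\bigr)A_{\alpha^\kappa}(j,z)
\]
is rewritten as $A_\alpha(n,z)\sum_{j=0}^{n-1}A_\alpha(j+1,z)^{-1}(C_j-B_j)A_{\alpha^\kappa}(j,z)$. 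This suggests the choice
\[
Q(z):=-\sum_{j=0}^{\infty}A_\alpha(j+1,z)^{-1}\bigl(C_j(z)-B_j(z)\bigr)A_{\alpha^\kappa}(j,z).
\]
By Lemma~\ref{lem:periodic-linear-growth} and Lemma~\ref{lem:perturbed-linear-growth}, the $j$-th term has norm at most $C|\kappa|(j+2)(j+1)|b(j)|$. The series therefore converges absolutely and uniformly on $\mathfrak e$, with $\|Q(z)\|\le C|\kappa|M_2(b)$. Continuity of $Q$ follows because each term is continuous (indeed polynomial in $z,z^{-1}$) and the convergence is uniform.

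With this choice, \eqref{eq:poly-comparison-I} holds with the tail remainder
\[
R_n(z)=-A_\alpha(n,z)\sum_{j\ge n}A_\alpha(j+1,z)^{-1}\bigl(C_j-B_j\bigr)A_{\alpha^\kappa}(j,z).
\]
A crude bound $\|A_\alpha(n)\|\,\|A_\alpha(j+1)^{-1}\|\le C(n+1)(j+2)$ would give $(n+1)\sum_{j\ge n}(j+1)^2|b(j)|$ only after absorbing an extra factor, so it is not enough. Instead I will use the sharper two-sided bound of Lemma~\ref{lem:periodic-linear-growth} for $j\ge n$:
\[
A_\alpha(n)A_\alpha(j+1)^{-1}=\bigl(A_\alpha(j+1)A_\alpha(n)^{-1}\bigr)^{-1},
\]
whose norm is $\le C_0(j+2-n)$, using that $\det$ has modulus one on $\partial\mathbb D$ so a $2\times2$ inverse has the same norm. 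Combined with $\|A_{\alpha^\kappa}(j)\|\le C(j+1)$, each term is bounded by $C|\kappa|(j+1)^2|b(j)|$, which gives $\|R_n\|\le C|\kappa|\sum_{j\ge n}(j+1)^2|b(j)|$. This is even stronger than the claimed bound, which carries an additional factor $(n+1)$.

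For \eqref{eq:poly-comparison-II} I will run the symmetric telescoping, expanding around the perturbed cocycle:
\[
A_{\alpha^\kappa}(n)-A_\alpha(n)=\sum_{j=0}^{n-1}A_{\alpha^\kappa}(n)A_{\alpha^\kappa}(j+1)^{-1}\bigl(C_j-B_j\bigr)A_\alpha(j).
\]
Then I define $P(z):=\sum_{j\ge0}A_{\alpha^\kappa}(j+1,z)^{-1}(C_j-B_j)A_\alpha(j,z)$ and $K_n:=-A_{\alpha^\kappa}(n)\sum_{j\ge n}(\cdots)$. The estimate for $P$ is identical to the one for $Q$.

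The main obstacle is the tail $K_n$. There the two-sided bound $\|A_{\alpha^\kappa}(n)A_{\alpha^\kappa}(j+1)^{-1}\|\lesssim 1+j-n$ for the perturbed cocycle is not available from Lemma~\ref{lem:perturbed-linear-growth}, which controls only one-sided products. The simplest fix is to accept the crude bound $\|A_{\alpha^\kappa}(n)\|\le C(n+1)$ together with $\|A_{\alpha^\kappa}(j+1)^{-1}(C_j-B_j)A_\alpha(j)\|\le C|\kappa|(j+2)(j+1)|b(j)|$. This gives
\[
\|K_n\|\le C|\kappa|(n+1)\sum_{j\ge n}(j+1)^2|b(j)|,
\]
which is exactly the form stated, and applying the same crude bound to $R_n$ gives the same form. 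Finally, uniformity of $C$ in $\kappa$ comes from the uniformity asserted in Lemma~\ref{lem:perturbed-linear-growth} and from $\sup_j|\alpha^\kappa_j|\le r<1$, which makes $C_1$ independent of $\kappa$.
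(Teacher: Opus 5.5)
Your proposal is correct and is essentially the paper's proof. It uses the same telescoping identity, defines $Q$ and $P$ as the full series whose terms are bounded by $C|\kappa|(j+1)^2|b(j)|$ via Lemmas~\ref{lem:periodic-linear-growth} and~\ref{lem:perturbed-linear-growth}, and takes $R_n$, $K_n$ to be $A_\alpha(n)$, resp.\ $A_{\alpha^\kappa}(n)$, times the tail, estimated with the crude bound $\|A(n)\|\le C(n+1)$. Contrary to your aside, that crude bound is exactly sufficient for the stated remainder estimate, so your two-sided refinement for $R_n$ is optional (it is valid, and it gives the stronger bound without the factor $n+1$).

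One cosmetic slip: with your choice of $Q$ the remainder is $R_n(z)=+A_\alpha(n,z)\sum_{j\ge n}A_\alpha(j+1,z)^{-1}\bigl(C_j(z)-B_j(z)\bigr)A_{\alpha^\kappa}(j,z)$, not minus this. This does not affect any norm bound.
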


\begin{proof}Let
\[
B_j(z)=S(\alpha_j,z),
\, \, 
C_j(z)=S(\alpha_j^\kappa,z).
\]
By taking $\varepsilon>0$ sufficiently small, there exists $C_1>0$ such
that
\begin{equation}\label{eq:step-difference-poly}
\|B_j(z)-C_j(z)\|
\leq
C_1|\kappa||b(j)|,
\, \, j\geq 0,\, \,  z\in\partial\mathbb D.
\end{equation}

The telescoping identity gives
\[
A_\alpha(n,z)-A_{\alpha^\kappa}(n,z)
=
A_\alpha(n,z)
\sum_{j=0}^{n-1}q_j(z),
\]
where
\[
q_j(z)
=
A_\alpha(j+1,z)^{-1}
\bigl(B_j(z)-C_j(z)\bigr)
A_{\alpha^\kappa}(j,z).
\]
By Lemmas~\ref{lem:periodic-linear-growth} and
\ref{lem:perturbed-linear-growth}, together with
\eqref{eq:step-difference-poly},
\begin{align*}
\|q_j(z)\|
&\leq
\|A_\alpha(j+1,z)^{-1}\|
\|B_j(z)-C_j(z)\|
\|A_{\alpha^\kappa}(j,z)\| \\
&\leq
C|\kappa|(j+2)(j+1)|b(j)| \\
&\leq
C|\kappa|(j+1)^2|b(j)|.
\end{align*}
Therefore, by \eqref{eq:second-moment-b},
\[
Q(z):=
\sum_{j=0}^{\infty}q_j(z)
\]
converges absolutely and uniformly for $z\in\mathfrak e$. In
particular,
\[
\|Q(z)\|
\leq
C|\kappa|
\sum_{j=0}^{\infty}(j+1)^2|b(j)|
=
C|\kappa|M_2(b).
\]
Since every $q_j$ is continuous in $z$, the uniform convergence also
implies that $Q$ is continuous on $\mathfrak e$.

Set
\[
E_n(z):=
\sum_{j=n}^{\infty}q_j(z).
\]
Then
\[
A_\alpha(n,z)-A_{\alpha^\kappa}(n,z)
=
A_\alpha(n,z)Q(z)-A_\alpha(n,z)E_n(z).
\]
Thus, setting
\[
R_n(z):=-A_\alpha(n,z)E_n(z),
\]
Lemma~\ref{lem:periodic-linear-growth} gives
\begin{align*}
\|R_n(z)\|
&\leq
\|A_\alpha(n,z)\|
\sum_{j=n}^{\infty}\|q_j(z)\| \\
&\leq
C|\kappa|(n+1)
\sum_{j=n}^{\infty}(j+1)^2|b(j)|.
\end{align*}
This proves \eqref{eq:poly-comparison-I} and the required estimate for
$R_n$. The proof of \eqref{eq:poly-comparison-II}, together with the required estimate for $K_n$, follows similarly. This completes the proof of the proposition.

\end{proof}

\section{Sharp Strichartz-type inequality}

Let $\nu$ be a positive Borel measure on the unit circle $\partial\mathbb{D}$,
absolutely continuous with respect to the normalized Lebesgue measure.
Let $f$ denote its Radon-Nikodym derivative. We say that $\nu$ has a
singularity of type $\varrho\in[0,1)$ if there exist a constant
$\gamma>0$, a finite family of pairwise disjoint arcs
$I_j\subset\partial\mathbb{D}$, $j=1,\ldots,m$, points
$z_j\in I_j$, numbers $0\leq\varrho_j\leq\varrho$, and non-negative
functions $f_j$ such that
\[
f(z)=\sum_{j=1}^m f_j(z),
\, \, 
0\leq f_j(z)\leq
\gamma f_{\varrho,j}(z)
=
\gamma\frac{\chi_{I_j}(z)}
{|z-z_j|^{\varrho_j}},
\, \, 
z\in\partial\mathbb{D}.
\]
When $\varrho_j=0$, the choice of $z_j$ is immaterial.

Furthermore, we say that a positive Borel measure $\mu$ on
$\partial\mathbb{D}$ has a weak singularity of type
$\varrho\in[0,1)$ if there exist a positive Borel measure $\nu$ with
a singularity of type $\varrho$ and a constant $\gamma'>0$ such that,
for every continuous non-negative function
$g:\partial\mathbb{D}\to\mathbb{R}$,
\[
\int_{\partial\mathbb{D}} g(z)\,d\mu(z)
\leq
\gamma'
\int_{\partial\mathbb{D}} g(z)\,d\nu(z).
\]

\begin{proposition}\label{thmStrichartz}
Let $\mu$ be as above. Suppose that $\mu$ has a weak singularity of
type $\varrho\in[0,1)$. Then there exists a constant $C_\mu>0$ such that:
\begin{enumerate}

\item[(i)] if $0\leq\varrho<\frac12$, then
\[
\frac{1}{K}
\sum_{n=0}^{K-1}
\left|
\int_{\partial\mathbb{D}}z^{-n}\,d\mu(z)
\right|^2
\leq
C_\mu\frac{1}{K},
\, \, 
\forall K\geq1;
\]

\item[(ii)] if $\varrho=\frac12$, then
\[
\frac{1}{K}
\sum_{n=0}^{K-1}
\left|
\int_{\partial\mathbb{D}}z^{-n}\,d\mu(z)
\right|^2
\leq
C_\mu\frac{\log(K)}{K},
\, \, 
\forall K\geq2;
\]

\item[(iii)] if $\frac12<\varrho<1$, then
\[
\frac{1}{K}
\sum_{n=0}^{K-1}
\left|
\int_{\partial\mathbb{D}}z^{-n}\,d\mu(z)
\right|^2
\leq
C_\mu\frac{1}{K^{2(1-\varrho)}},
\, \, 
\forall K\geq1.
\]

\end{enumerate}
\end{proposition}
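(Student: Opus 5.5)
The plan is to bound the time average by a double integral of a nonnegative Fejér kernel against $\mu\otimes\mu$. Then we use the weak-singularity hypothesis twice to replace $\mu$ by the explicit model densities $\gamma\chi_{I_j}|z-z_j|^{-\varrho_j}$. The proof ends with an elementary one-dimensional integral estimate whose behaviour changes exactly at $\varrho=\frac12$.

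\emph{Step 1: Fejér majorization.} Let $N=2K$ and let $F_N(e^{i\theta})=\sum_{|n|<N}\bigl(1-\frac{|n|}{N}\bigr)e^{in\theta}$ be the Fejér kernel. It is continuous and nonnegative, and it satisfies $F_N(e^{i\theta})\lesssim\min\bigl(N,\,N^{-1}\theta^{-2}\bigr)$ for $|\theta|\le\pi$. Its Fourier coefficients are at least $\frac12$ for $|n|<K$. Expanding $F_N(z\overline w)$ gives
\[
\sum_{n=0}^{K-1}\Bigl|\int z^{-n}\,d\mu\Bigr|^2\le 2\sum_{|n|<N}\Bigl(1-\tfrac{|n|}{N}\Bigr)\Bigl|\int z^{-n}\,d\mu\Bigr|^2=2\iint F_N(z\overline w)\,d\mu(z)\,d\mu(w).
\]
The function $g(z)=\int F_N(z\overline w)\,d\mu(w)$ is continuous and nonnegative, so the weak-singularity inequality applies in the variable $z$. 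Repeating the argument in $w$, with $g(w)=\int F_N(z\overline w)\,d\nu(z)$, we bound the double integral by $\gamma'^2\iint F_N(z\overline w)\,d\nu(z)\,d\nu(w)$. Using $f\le\gamma\sum_j f_{\varrho,j}$, it then suffices to bound, for each pair $(i,j)$,
\[
I_{ij}(N)=\iint F_N(z\overline w)\,|z-z_i|^{-\varrho_i}\,|w-z_j|^{-\varrho_j}\,d\ell(z)\,d\ell(w).
\]

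\emph{Step 2: smoothing a single singularity.} In angular coordinates, the claim is
\[
\int F_N(e^{i(\theta-\phi)})\,|\phi-b|^{-s}\,d\phi\lesssim\min\bigl(N^{s},\,|\theta-b|^{-s}\bigr),\qquad 0\le s<1,
\]
uniformly in $N$ and $b$. To prove it, split the $\phi$-integral into the regions $|\phi-\theta|<\frac12|\theta-b|$ and its complement, and, separately, the regions $|\phi-b|<1/N$ and $|\phi-b|\ge 1/N$. Then use $\int F_N=1$ together with the two pointwise bounds on $F_N$. The condition $s<1$ gives local integrability, and the $N^{-1}\theta^{-2}$ tail makes the far contributions summable. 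I expect this step to be the main obstacle. Keeping the constants uniform as the two singular points $z_i,z_j$ approach each other requires some care, and I would handle it by doing the dyadic decomposition around $b$ explicitly.

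\emph{Step 3: the final integral.} Step 2 reduces matters to
\[
\int\min\bigl(N^{\varrho_j},|\theta-b|^{-\varrho_j}\bigr)\,|\theta-a|^{-\varrho_i}\,d\theta .
\]
By rearrangement, or simply by splitting at $1/N$, the worst case is $a=b$ and $\varrho_i=\varrho_j=\varrho$. In that case the integral is at most
\[
N^{2\varrho}\!\int_{|\theta|<1/N}\!d\theta+\int_{1/N}^{\pi}\theta^{-2\varrho}\,d\theta .
\]
This is $O(1)$ when $\varrho<\frac12$, $O(\log N)$ when $\varrho=\frac12$, and $O(N^{2\varrho-1})$ when $\varrho>\frac12$. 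Since $N=2K$, dividing by $K$ gives the three bounds (i)–(iii), with $\log(2K)\lesssim\log K$ for $K\ge2$. Mixed exponents $\varrho_i\ne\varrho_j$ are handled by the same split and are dominated by the case $\varrho_i=\varrho_j=\varrho$.
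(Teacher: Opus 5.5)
Your proof is correct, but it follows a different route from the paper's.

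\textbf{Majorant.} The paper bounds the Ces\`aro average using the Gaussian weight $e^{1-n^2/K^2}\ge 1$ on $0\le n\le K-1$, and gets positivity of the periodic Gaussian kernel from Poisson summation. You use the Fej\'er kernel $F_{2K}$ instead, whose nonnegativity and pointwise bounds are classical. The double domination $\mu\le\gamma'\nu$, applied in each variable, is the same in both arguments.

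\textbf{Estimating the model double integral.} The paper works on the Fourier side. It rewrites the double integral as $\sum_n e^{-n^2/K^2}\widehat{f_{\varrho,j}}(n)\overline{\widehat{f_{\varrho,l}}(n)}$ and cites, without proof, the decay $|\widehat{f_{\varrho,j}}(n)|\lesssim(1+|n|)^{\varrho_j-1}$ for truncated power singularities. Its trichotomy then comes from $\sum_n e^{-n^2/K^2}n^{2(\varrho-1)}$.

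You stay in physical space. You use the convolution bound $F_N*|\cdot-b|^{-s}\lesssim\min(N^s,|\theta-b|^{-s})$ followed by an elementary integral. Your trichotomy comes from $\int_{1/N}^{\pi}\theta^{-2\varrho}\,d\theta$, that is, from whether the model density lies in $L^2$. By Parseval this is the dual of the paper's computation.

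\textbf{Trade-offs.} The paper's route is shorter once the Fourier-decay lemma is granted. Yours is self-contained and needs no asymptotics of Fourier coefficients. In particular, the jump discontinuities at the endpoints of $I_j$ need no separate treatment, since you drop $\chi_{I_j}$ by positivity. Your route also shows directly that only diagonal pairs $i=j$ can produce growth. For $i\neq j$ the points $z_i\neq z_j$ are distinct because the arcs are disjoint, and $\int|\theta-a|^{-\varrho_i}|\theta-b|^{-\varrho_j}\,d\theta<\infty$ uniformly in $N$.

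\textbf{Two small corrections.}
\begin{enumerate}
\item Your worry in Step 2 about uniformity as singular points approach each other is unnecessary. Step 2 involves a single singularity and is translation invariant. In Step 3 the configuration $\{z_j\}$ is fixed, so $C_\mu$ may depend on the separations.
\item The display in Step 3 is not literally an upper bound. The near-region term is $N^{\varrho}\int_{|\theta|<1/N}|\theta|^{-\varrho}\,d\theta=\frac{2}{1-\varrho}N^{2\varrho-1}$, whereas your $N^{2\varrho}\int_{|\theta|<1/N}d\theta=2N^{2\varrho-1}$. These are of the same order, so the conclusions (i)--(iii) are unaffected.
\end{enumerate}
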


\begin{proof}
For each $f\in L^1(\partial\mathbb{D})$, we denote its $n$-th
Fourier coefficient by
\[
\widehat{f}(n)
=
\int_{\partial\mathbb{D}}w^{-n}f(w)\,d\ell(w).
\]

For $0\leq n\leq K-1$, we have
$e^{1-n^2/K^2}\geq1$. Therefore,
\begin{eqnarray*}
\frac{1}{K}
\sum_{n=0}^{K-1}
\left|
\int_{\partial\mathbb{D}}z^{-n}\,d\mu(z)
\right|^2
&\leq&
\frac{e}{K}
\sum_{n\in\mathbb{Z}}
e^{-n^2/K^2}
\left|
\int_{\partial\mathbb{D}}z^{-n}\,d\mu(z)
\right|^2
\\
&=&
\frac{e}{K}
\int_{\partial\mathbb{D}}
\int_{\partial\mathbb{D}}
\left[
\sum_{n\in\mathbb{Z}}
e^{-n^2/K^2}z^{-n}w^n
\right]
d\mu(z)\,d\mu(w).
\end{eqnarray*}

The expression in brackets is the periodic Gaussian kernel. Indeed,
by the Poisson summation formula,
\[
\sum_{n\in\mathbb{Z}}
e^{-n^2/K^2}e^{in(\phi-\theta)}
=
\sqrt{\pi}K
\sum_{m\in\mathbb{Z}}
e^{-\frac{K^2}{4}(\phi-\theta-2\pi m)^2},
\, \, 
\phi,\theta\in[0,2\pi).
\]
In particular, this kernel is real-valued and non-negative.

Since the weak-singularity hypothesis implies
\[
d\mu\leq\gamma'\,d\nu
\]
in the sense of positive measures, and since the periodic Gaussian
kernel is non-negative, we may apply the domination successively in
the two variables. Thus
\begin{eqnarray}\label{eqFdec1}\nonumber
\frac{1}{K}
\sum_{n=0}^{K-1}
\left|
\int_{\partial\mathbb{D}}z^{-n}\,d\mu(z)
\right|^2
&\lesssim&
\frac{e}{K}
\int_{\partial\mathbb{D}}
\int_{\partial\mathbb{D}}
\left[
\sum_{n\in\mathbb{Z}}
e^{-n^2/K^2}z^{-n}w^n
\right]
d\nu(z)\,d\nu(w)
\\ \nonumber
&\lesssim&
\frac{e}{K}
\sum_{j,l=1}^m
\int_{\partial\mathbb{D}}
\int_{\partial\mathbb{D}}
\left[
\sum_{n\in\mathbb{Z}}
e^{-n^2/K^2}z^{-n}w^n
\right]
f_{\varrho,j}(z)
f_{\varrho,l}(w)
\,d\ell(z)\,d\ell(w)
\\ \nonumber
&=&
\frac{e}{K}
\sum_{j,l=1}^m
\sum_{n\in\mathbb{Z}}
e^{-n^2/K^2}
\widehat{f_{\varrho,j}}(n)
\overline{\widehat{f_{\varrho,l}}(n)}
\\
&\lesssim&
\frac{1}{K}
\sum_{n=1}^\infty
e^{-n^2/K^2}
n^{2(\varrho-1)}.
\end{eqnarray}
Here we have used the standard Fourier estimate for truncated power
singularities,
\[
\left|
\widehat{f_{\varrho,j}}(n)
\right|
\lesssim
(1+|n|)^{\varrho_j-1}
\leq
(1+|n|)^{\varrho-1},
\, \, 
n\in\mathbb{Z}.
\]
The finitely many indices $j$ and $l$ are absorbed into the implicit
constant.

We now estimate the last sum. If
$0\leq\varrho<\frac12$, then
$2(\varrho-1)<-1$, and hence
\[
\sum_{n=1}^\infty
e^{-n^2/K^2}n^{2(\varrho-1)}
\lesssim1.
\]
Consequently,
\[
\frac{1}{K}
\sum_{n=1}^\infty
e^{-n^2/K^2}n^{2(\varrho-1)}
\lesssim
\frac{1}{K}.
\]

If $\varrho=\frac12$, then
\[
2(\varrho-1)=-1,
\]
and therefore
\[
\sum_{n=1}^\infty
e^{-n^2/K^2}n^{-1}
\lesssim
\log(K),
\, \,  K\geq2.
\]
It follows that
\[
\frac{1}{K}
\sum_{n=1}^\infty
e^{-n^2/K^2}n^{2(\varrho-1)}
\lesssim
\frac{\log(K)}{K}.
\]

Finally, if $\frac12<\varrho<1$, then
$2(\varrho-1)>-1$. Splitting the sum at $n=K$ and using the
Gaussian decay for $n>K$, we obtain
\[
\sum_{n=1}^\infty
e^{-n^2/K^2}n^{2(\varrho-1)}
\lesssim
K^{2\varrho-1}.
\]
Hence,
\[
\frac{1}{K}
\sum_{n=1}^\infty
e^{-n^2/K^2}n^{2(\varrho-1)}
\lesssim
K^{-2(1-\varrho)}.
\]
Together with \eqref{eqFdec1}, these estimates prove
(i)--(iii).
\end{proof}

\begin{remark}{\rm 
The Gaussian regularization argument used above is analogous to the one developed in \cite{AloCardeO} for a related estimate involving the continuous-time Fourier transform.}
\end{remark}

\section{Proof of the Main Theorem}

We begin with a consequence of the asymptotic comparison formula established in Proposition~\ref{prop:asymptotic-formula-cmv}. The
purpose of the following lemma is to identify the asymptotic boundary vector which naturally arises from the perturbation.

\begin{lemma}\label{lem:ukappa-isotropic} Let $\mathfrak e = \sigma_{\mathrm{ess}}(\mathcal C_\kappa)$ and  let
\[
J=
\begin{pmatrix}
1&0\\
0&-1
\end{pmatrix},
\, \, 
u_0=\frac{1}{\sqrt2}
\begin{pmatrix}
1\\
1
\end{pmatrix}.
\]
Assume that $Q(z)$ is the matrix given by Proposition~\ref{prop:asymptotic-formula-cmv}
and that $\|Q(z)\|<1$. Define
\[
v(z):=(I-Q(z))u_0,
\, \, 
u_\kappa(z):=\frac{v(z)}{\|v(z)\|}.
\]
Then, for every $z\in\mathfrak e$,
\[
u_\kappa(z)^*Ju_\kappa(z)=0.
\]
\end{lemma}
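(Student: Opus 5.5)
The plan is to show that $I-Q(z)$ is itself $J$-unitary, i.e. $(I-Q(z))^*J(I-Q(z))=J$ for $z\in\mathfrak e$. Then
\[
v(z)^*Jv(z)=u_0^*(I-Q)^*J(I-Q)u_0=u_0^*Ju_0=\tfrac12(1-1)=0,
\]
and dividing by $\|v(z)\|^2$ gives the claim. Here $\|v(z)\|>0$ because $\|Q(z)\|<1$ makes $I-Q(z)$ invertible.

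\textbf{Step 1: $J$-unitarity of the Szeg\H{o} matrices.} For $|z|=1$ I will check directly that $S(\alpha,z)^*JS(\alpha,z)=J$. The columns of $S$ are $c_1=\rho^{-1}(z,-\alpha z)^T$ and $c_2=\rho^{-1}(-\overline\alpha,1)^T$. One computes
\[
c_1^*Jc_1=\frac{1-|\alpha|^2}{\rho^2}=1,\qquad c_2^*Jc_2=-1,\qquad c_1^*Jc_2=\rho^{-2}\bigl(-\overline z\,\overline\alpha+\overline\alpha\,\overline z\bigr)=0.
\]
By multiplicativity, $A_\alpha(n,z)^*JA_\alpha(n,z)=J$ and $A_{\alpha^\kappa}(n,z)^*JA_{\alpha^\kappa}(n,z)=J$ for all $n$ and all $z\in\partial\mathbb D$. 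Consequently $A_\alpha(n,z)^{-*}JA_\alpha(n,z)^{-1}=J$ as well.

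\textbf{Step 2: rewrite $I-Q$ through the comparison formula.} From \eqref{eq:poly-comparison-I} we have $A_{\alpha^\kappa}(n,z)=A_\alpha(n,z)(I-Q(z))-R_n(z)$, so
\[
I-Q(z)=A_\alpha(n,z)^{-1}\bigl(A_{\alpha^\kappa}(n,z)+R_n(z)\bigr)\qquad\text{for every } n.
\]
Inserting this into $(I-Q)^*J(I-Q)$ and using Step 1 gives, for each $n$,
\[
(I-Q)^*J(I-Q)=J+A_{\alpha^\kappa}(n)^*JR_n+R_n^*JA_{\alpha^\kappa}(n)+R_n^*JR_n .
\]
The left-hand side does not depend on $n$.

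\textbf{Step 3: the error terms vanish as $n\to\infty$.} This is the main obstacle, and it is where the decay hypothesis enters. Lemma~\ref{lem:perturbed-linear-growth} bounds $\|A_{\alpha^\kappa}(n,z)\|\le C(n+1)$, and Proposition~\ref{prop:asymptotic-formula-cmv} bounds $\|R_n(z)\|\le C|\kappa|(n+1)\sum_{j\ge n}(j+1)^2|b(j)|$. The error is therefore $O\bigl((n+1)^2\sum_{j\ge n}(j+1)^{2-\tau}\bigr)=O(n^{5-\tau})$. This tends to $0$ uniformly on $\mathfrak e$ precisely because $\tau>5$, the standing hypothesis of Theorem~\ref{thm:perturbed-periodic-cmv}. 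Letting $n\to\infty$ yields $(I-Q)^*J(I-Q)=J$, and the conclusion follows as in the first paragraph.

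If one wants to avoid the $\tau>5$ restriction, the alternative is to compute $(I-Q)^*J(I-Q)$ directly from the series $Q=\sum_j q_j$ using the $J$-unitarity of each factor. However, the limiting argument above is shorter and suffices under the theorem's assumptions.
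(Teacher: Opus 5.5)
Your proof is correct and follows the paper's argument: both obtain $(I-Q(z))^*J(I-Q(z))=J$ by letting $n\to\infty$ in \eqref{eq:poly-comparison-I}, using the $J$-unitarity of the Szeg\H{o} matrices on $\partial\mathbb D$, and then conclude from $u_0^*Ju_0=0$. The difference lies in the error control: the paper works on $\mathfrak e^\circ$ (Floquet boundedness of $A_\alpha(n,z)^{-1}$ together with $R_n(z)\to0$) and reaches the band edges by continuity of $Q$, whereas your expansion via $A_\alpha^{-*}JA_\alpha^{-1}=J$ and Lemma~\ref{lem:perturbed-linear-growth} covers all of $\mathfrak e$ at once, at the price of the full $\tau>5$; your suggested alternative is the cleanest, since $A_\alpha(n,z)^{-1}A_{\alpha^\kappa}(n,z)=I-\sum_{j<n}q_j(z)\to I-Q(z)$ uniformly on $\mathfrak e$ under $M_2(b)<\infty$ alone.
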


\begin{proof}
For $|z|=1$, the Szeg\H{o} transfer matrices are $J$-unitary:
\[
A_\alpha(n,z)^*JA_\alpha(n,z)
=
A_{\alpha^\kappa}(n,z)^*JA_{\alpha^\kappa}(n,z)
=
J.
\]
By Proposition~\ref{prop:asymptotic-formula-cmv},
\[
A_{\alpha^\kappa}(n,z)
=
A_\alpha(n,z)(I-Q(z))-R_n(z).
\]
Hence,
\[
A_\alpha(n,z)^{-1}A_{\alpha^\kappa}(n,z)
=
I-Q(z)-A_\alpha(n,z)^{-1}R_n(z).
\]
For $z\in\mathfrak e^\circ$, periodic Floquet theory (see, e.g., \cite[Chapter~11]{SimonOPUC2}) gives uniform
boundedness of $A_\alpha(n,z)^{-1}$, while $R_n(z)\to0$ as $n \to \infty$. Therefore, for every $z\in\mathfrak e^\circ$,
\[
A_\alpha(n,z)^{-1}A_{\alpha^\kappa}(n,z)
\longrightarrow I-Q(z)
\]
as $n \to \infty$. Since every matrix on the left-hand side is $J$-unitary, passing to the
limit gives
\[
(I-Q(z))^*J(I-Q(z))=J.
\]
Since
\[
u_0^*Ju_0=0,
\]
we obtain
\[
v(z)^*Jv(z)
=
u_0^*(I-Q(z))^*J(I-Q(z))u_0
=
u_0^*Ju_0
=
0.
\]
Note that $v(z)\neq0$ because $\|Q(z)\|<1$, and therefore
\[
u_\kappa(z)^*Ju_\kappa(z)
=
\frac{v(z)^*Jv(z)}{\|v(z)\|^2}
=
0.
\]

Finally, since $u_\kappa$ is continuous on $\mathfrak e$, the identity extends by
continuity to the band edges.
\end{proof}

\begin{lemma}\label{lem:no-edge-atoms} Assume the hypotheses of Theorem~\ref{thm:perturbed-periodic-cmv}, and let $\mathfrak e =\sigma_{\mathrm{ess}}(\mathcal C_\alpha).$ If $|\kappa|$ is sufficiently small, then
\[
\mu_\alpha^\kappa(\{\zeta\})=0,
\, \,  \zeta\in \partial \mathfrak e.
\]
\end{lemma}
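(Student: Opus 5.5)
We need to show that $\mu_{\alpha^\kappa}$ has no atom at any band edge $\zeta\in\partial\mathfrak e$. The plan is to use the standard criterion for eigenvalues of CMV matrices. The value $\zeta\in\partial\mathbb D$ is an atom of $\mu_{\alpha^\kappa}$ if and only if $\zeta$ is an eigenvalue of $\mathcal C_\kappa$. Equivalently, by the OPUC characterization (Simon, Chapter~1 of \cite{SimonOPUC1}; e.g.\ $\mu(\{\zeta\})=(\sum_n|\varphi_n(\zeta)|^2)^{-1}$), the quantity
\[
\sum_{n\ge0}|\varphi_n(\zeta)|^2=\tfrac12\sum_{n\geq0}\|A_{\alpha^\kappa}(n,\zeta)u_0\|^2
\]
is finite. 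So it suffices to prove that $\|A_{\alpha^\kappa}(n,\zeta)u_0\|$ does not tend to $0$; in fact we will show it is bounded below by a positive constant along all $n$.

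The first step is to use $J$-unitarity on $|\zeta|=1$. Since $A_{\alpha^\kappa}(n,\zeta)^*JA_{\alpha^\kappa}(n,\zeta)=J$ and $u_0^*Ju_0=0$, the vector $w_n=A_{\alpha^\kappa}(n,\zeta)u_0$ is $J$-isotropic, so $|(w_n)_1|=|(w_n)_2|$. This alone gives no lower bound, so a second ingredient is needed.

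The second step, and the main one, uses the inverse bound from Lemma~\ref{lem:perturbed-linear-growth}:
\[
\|A_{\alpha^\kappa}(n,\zeta)^{-1}\|\le C(n+1).
\]
This gives only $\|w_n\|\ge (C(n+1))^{-1}$, so that $\sum_n\|w_n\|^2$ is not bounded below by a divergent series without more care. I would instead use the comparison \eqref{eq:poly-comparison-I}:
\[
w_n=A_\alpha(n,\zeta)(I-Q(\zeta))u_0-R_n(\zeta)u_0,
\]
with $\|R_n\|\lesssim|\kappa|(n+1)\sum_{j\ge n}(j+1)^2(j+1)^{-\tau}\lesssim (n+1)^{4-\tau}\to0$, since $\tau>5$. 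The unperturbed part $A_\alpha(n,\zeta)v$, with $v=(I-Q(\zeta))u_0\neq0$, can be analysed through Floquet theory at a band edge. There the monodromy $A_\alpha(p,\zeta)$, after the normalization $\zeta^{-p/2}$, has trace $\pm2$, so it is either $\pm I$ or a nontrivial Jordan block. In the first case $\|A_\alpha(n,\zeta)v\|$ is bounded below by $c\|v\|$ along all $n$, because the powers are bounded with bounded inverses. In the Jordan case $\|A_\alpha(kp,\zeta)v\|$ either grows linearly, if $v$ is not the eigenvector, or stays constant, if it is. In every case $\inf_n\|A_\alpha(n,\zeta)v\|\ge c>0$, since the products over a period are invertible with bounded inverse.

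The third step concludes from these bounds. Since $\|R_n(\zeta)\|\to0$, we get $\liminf\|w_n\|\ge c>0$, hence $\sum\|w_n\|^2=\infty$ and $\mu_{\alpha^\kappa}(\{\zeta\})=0$. Smallness of $|\kappa|$ is used to ensure $\|Q\|<1$, so that $v\ne0$ (Lemma~\ref{lem:ukappa-isotropic}), and that the constants in Proposition~\ref{prop:asymptotic-formula-cmv} apply.

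The main obstacle is the uniform lower bound $\inf_n\|A_\alpha(n,\zeta)v\|>0$ at the edge, i.e.\ ruling out decay of the periodic solution. This follows because $A_\alpha(kp+r,\zeta)=A_\alpha(r,\zeta)M^k$, where $M$ is conjugate to $\pm$ a unipotent matrix. For such $M$, $\|M^kv\|\ge c\|v\|$ for all $k$: the eigen-component is preserved, and any other component grows.
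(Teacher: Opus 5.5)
Your proof is correct, but at the key step it takes a different route from the paper's. Both arguments start from $A_{\alpha^\kappa}(n,\zeta)u_0=A_\alpha(n,\zeta)v-R_n(\zeta)u_0$ with $v=(I-Q(\zeta))u_0\neq0$, and both end with the point-mass criterion.

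\textbf{The paper's route.} It uses the $J$-isotropy of $v/\|v\|$ (Lemma~\ref{lem:ukappa-isotropic}, extended to the edge by continuity) to write $v/\|v\|=\eta\,\frac1{\sqrt2}(1,\lambda)^T$. The gauge identity $S(\alpha,z)D_\lambda=D_\lambda S(\overline{\lambda}\alpha,z)$ then turns $\|A_\alpha(n,\zeta)v\|$ into $\|v\|\,\|A_{\overline{\lambda}\alpha}(n,\zeta)u_0\|$. Next it asserts, without proof, that the rotated periodic measure $\mu_{\overline{\lambda}\alpha}$ has no atom at the band edge $\zeta$. Finally, it transfers the divergence of $\sum_n\|A_{\overline{\lambda}\alpha}(n,\zeta)u_0\|^2$ to the perturbed sum using $\sum_n\|R_n(\zeta)\|^2<\infty$.

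\textbf{Your route.} You show directly that $\inf_n\|A_\alpha(n,\zeta)v\|>0$ for \emph{every} nonzero $v$. This follows from $A_\alpha(kp+r,\zeta)=A_\alpha(r,\zeta)A_\alpha(p,\zeta)^k$ together with the fact that $\zeta^{-p/2}A_\alpha(p,\zeta)$ has determinant $1$ (since $\det S(\alpha,z)=z$) and trace $\Delta(\zeta)=\pm2$. Hence it equals $\pm I$ or is conjugate to $\pm$ a unipotent Jordan block.

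\textbf{What each buys.} Your approach needs only $v\neq0$, not the $J$-isotropy or the rotation to $\overline{\lambda}\alpha$. It also supplies exactly the Floquet computation that justifies the paper's unsupported claim about periodic measures at band edges: that claim is your bound with $v=u_0$ for the sequence $\overline{\lambda}\alpha$. Moreover, it needs only $\|R_n(\zeta)\|\to0$, i.e.\ $\tau>4$, rather than square-summability of $R_n$, i.e.\ $\tau>9/2$. The paper's route is shorter on the page because it recycles the Aleksandrov-rotation reduction already set up in Lemma~\ref{teclemma}.

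\textbf{Two harmless slips.} First, with $u_0=\frac1{\sqrt2}(1,1)^T$ one has $A_{\alpha^\kappa}(n,\zeta)u_0=\frac1{\sqrt2}(\varphi_n(\zeta),\varphi_n^*(\zeta))^T$ and $|\varphi_n|=|\varphi_n^*|$ on $\partial\mathbb D$. So $\sum_n|\varphi_n(\zeta)|^2=\sum_n\|A_{\alpha^\kappa}(n,\zeta)u_0\|^2$, without the factor $\frac12$; this does not affect divergence. Second, in the Jordan case it is the component transverse to the eigenvector that is preserved (forcing linear growth when nonzero), not the eigen-component. Your earlier sentence states this dichotomy correctly.
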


\begin{proof}
Let $\zeta\in\partial \mathfrak e$. By Proposition~\ref{prop:asymptotic-formula-cmv},
\[
A_{\alpha^\kappa}(n,\zeta)u_0
=
A_\alpha(n,\zeta)(I-Q(\zeta))u_0
-
R_n(\zeta)u_0,
\]
with
\[
\|R_n(\zeta)\| \leq \gamma |\kappa|(n+1)^{4-\tau},
\]
for some constant $\gamma>0$. For $|\kappa|$ sufficiently small, $||Q(\zeta)||<1$ and
\[
v_\kappa(\zeta):=(I-Q(\zeta))u_0\neq0.
\]
By Lemma~\ref{lem:ukappa-isotropic}, 
\[
u_\kappa(\zeta)
=
\frac{v_\kappa(\zeta)}{\|v_\kappa(\zeta)\|}
\]
satisfies  $u_\kappa(\zeta)^*Ju_\kappa(\zeta) =0.$ Hence, there exist $\eta,\lambda\in\partial\mathbb D$ such that
\[
u_\kappa(\zeta)
=
\eta\,\frac{1}{\sqrt2}
\binom{1}{\lambda}.
\]
As in Lemma~\ref{teclemma}, this gives
\[
\|A_\alpha(n,\zeta)u_\kappa(\zeta)\|
=
\|A_{\overline{\lambda}\alpha}(n,\zeta)u_0\|.
\]

The sequence $\overline{\lambda}\alpha$ is periodic and has the same discriminant as $\alpha$. Thus $\zeta$ is a band edge for the corresponding periodic CMV matrix, whose spectral measure has no point mass at $\zeta$. By the standard point-mass criterion for OPUC (Theorem 2.2.1 (ii) in \cite{{SimonOPUC1}}),
\[
\sum_{n=0}^{\infty}
\|A_{\overline{\lambda}\alpha}(n,\zeta)u_0\|^2
=
\infty.
\]
Moreover,
\[
\|v_\kappa(\zeta)\|
\|A_\alpha(n,\zeta)u_\kappa(\zeta)\|
\leq
\|A_{\alpha^\kappa}(n,\zeta)u_0\|
+
\|R_n(\zeta)u_0\|.
\]
Hence,
\[
\|v_\kappa(\zeta)\|^2
\|A_\alpha(n,\zeta)u_\kappa(\zeta)\|^2
\leq
2\|A_{\alpha^\kappa}(n,\zeta)u_0\|^2
+
2\|R_n(\zeta)u_0\|^2.
\]
Since $\|v_\kappa(\zeta)\|>0$ and
\[
\sum_{n=0}^\infty \|R_n(\zeta)u_0\|^2
<\infty,
\]
while
\[
\sum_{n=0}^\infty
\|A_\alpha(n,\zeta)u_\kappa(\zeta)\|^2=\infty,
\]
it follows that
\[
\sum_{n=0}^\infty
\|A_{\alpha^\kappa}(n,\zeta)u_0\|^2=\infty.
\]
The point-mass criterion therefore yields $\mu_\alpha^\kappa(\{\zeta\})=0.$
\end{proof}

\begin{theorem}\label{thm:comparison-spectral-measures-cmv} Assume the hypotheses of Theorem \ref{thm:perturbed-periodic-cmv}. Let $\mathfrak e = \sigma_{\mathrm{ess}}(\mathcal C_\kappa)$. Then, for every nonnegative function $f\in C(\mathfrak e)$,
\[
\int_{\mathfrak e} f(z)\,d\mu_\alpha^\kappa(z)
\leq
16
\int_{\mathfrak e} f(z)\,\rho_{\alpha,\kappa}(z)\,d\ell(z),
\]
where the density $\rho_{\alpha,\kappa}(z)$ has a singularity of order at most $\varrho=1/2$.
\end{theorem}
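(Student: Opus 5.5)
The plan is to compare the Bernstein--Szeg\H{o} approximants of $\mu_\alpha^\kappa$ with those of the periodic problem, with the initial vector $u_0$ replaced by the twisted vector $u_\kappa(z)$ from Lemma~\ref{lem:ukappa-isotropic}. Theorem~\ref{thm:cmv-carmona} gives, for nonnegative $f\in C(\mathfrak e)$,
\[
\int f\,d\mu_\alpha^\kappa=\lim_{n\to\infty}\int f(z)\|A_{\alpha^\kappa}(n,z)u_0\|^{-2}\,d\ell(z).
\]
The first step is to fix $\varepsilon$ so small that $\|Q(z)\|\le 1/2$ on $\mathfrak e$; this is possible by Proposition~\ref{prop:asymptotic-formula-cmv}, since $\tau>5$ makes $M_2(b)<\infty$. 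Then $\|v(z)\|\ge 1/2$, where $v(z)=(I-Q(z))u_0$, and
\[
A_{\alpha^\kappa}(n,z)u_0=\|v(z)\|\,A_\alpha(n,z)u_\kappa(z)-R_n(z)u_0 .
\]
On a compact $K\Subset\mathfrak e^\circ$, the uniform Floquet bound \eqref{eq:uniform-periodic-bounds} gives $\|A_\alpha(n,z)u_\kappa(z)\|\ge\gamma_K^{-1}$. The remainder satisfies $\|R_n\|\lesssim|\kappa|(n+1)^{4-\tau}\to0$ uniformly. Hence for $n$ large,
\[
\|A_{\alpha^\kappa}(n,z)u_0\|\ge \tfrac12\|v(z)\|\,\|A_\alpha(n,z)u_\kappa(z)\|\ge\tfrac14\|A_\alpha(n,z)u_\kappa(z)\|,
\]
so
\[
\|A_{\alpha^\kappa}(n,z)u_0\|^{-2}\le 16\,\|A_\alpha(n,z)u_\kappa(z)\|^{-2}\quad\text{on }K .
\]
This is the source of the constant $16$.

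The second step is to pass to the limit. Since $u_\kappa$ is continuous, unit, and $J$-isotropic (Lemma~\ref{lem:ukappa-isotropic}), Lemma~\ref{teclemma} applies with $u=u_\kappa$. For $f$ supported in $\mathfrak e^\circ$ it gives
\[
\int f\,d\mu_\alpha^\kappa\le 16\int f\rho_{\alpha,u_\kappa}\,d\ell .
\]
Here we set $\rho_{\alpha,\kappa}:=\rho_{\alpha,u_\kappa}$, which by that lemma is bounded inside the bands and has at most a square-root singularity at each edge. This is exactly a singularity of order $\varrho=1/2$ in the sense of Section~3: cover each edge by a small arc $I_j$ with $\varrho_j=1/2$ and the interiors by arcs with $\varrho_j=0$.

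The third step is to extend from $C_c(\mathfrak e^\circ)$ to general nonnegative $f\in C(\mathfrak e)$; I expect this to be the main obstacle. The band edges are finitely many points, and Lemma~\ref{lem:no-edge-atoms} shows $\mu_\alpha^\kappa$ has no atoms there. I would approximate $f$ from below by $f\chi_m$, where $\chi_m\in C_c(\mathfrak e^\circ)$ and $0\le\chi_m\uparrow\chi_{\mathfrak e^\circ}$. Monotone convergence on both sides then gives the inequality, because $\mu_\alpha^\kappa(\partial\mathfrak e)=0$, $\ell(\partial\mathfrak e)=0$, and $\rho_{\alpha,\kappa}\in L^1$ thanks to the integrable $|z-z_0|^{-1/2}$ bound.

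The delicate points are that the constants $\gamma_K$ degenerate near edges, and that the lower bound $n\ge n_K$ depends on $K$. Both are harmless here: the comparison is only needed on each fixed compact $K$, with the $n$-limit taken before exhausting $\mathfrak e^\circ$. One also has to justify taking the limit along the Bernstein--Szeg\H{o} sequence against $f\chi_m$. This is fine since $f\chi_m$ is continuous on $\partial\mathbb D$ after extension by zero, and we only need $\limsup$ and $\liminf$ inequalities, not convergence of the perturbed densities themselves.
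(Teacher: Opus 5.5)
Your proposal is correct and follows the paper's proof essentially step for step. It uses the same decomposition $A_{\alpha^\kappa}(n,z)u_0=\|v(z)\|A_\alpha(n,z)u_\kappa(z)-R_n(z)u_0$, the same pointwise bound with constant $16$, Lemma~\ref{teclemma} applied to $u=u_\kappa$ for $f\in C_c(\mathfrak e^\circ)$, and the same exhaustion argument based on Lemma~\ref{lem:no-edge-atoms}. The only difference is that you obtain the pointwise domination on compacts $K\Subset\mathfrak e^\circ$ via the bounded Floquet estimate \eqref{eq:uniform-periodic-bounds}, whereas the paper obtains it uniformly on $\mathfrak e$ via the linear-growth bound and $\tau>5$; your localized version is all that is actually used, and it needs only $\|R_n\|\to0$.
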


\begin{proof} Note that, by Weyl’s theorem \cite{SimonOPUC1}, one has
\[
\sigma_{\mathrm{ess}}(\mathcal C_\kappa)
=
\sigma_{\mathrm{ess}}(\mathcal C_\alpha)
=
\mathfrak e.
\]

Let
\[
u_0=\frac1{\sqrt2}
\begin{pmatrix}
1\\
1
\end{pmatrix}.
\]

Let $z\in \mathfrak e$ and let $\eta$ be given by Proposition \ref{prop:asymptotic-formula-cmv}. For $|\kappa|$ sufficiently small, by the same proposition, we have 
\[
A_\alpha(n,z)-A_{\alpha^\kappa}(n,z)
=
A_\alpha(n,z)Q(z)+R_n(z),
\]
where
\[
\|Q(z)\|\leq \eta|\kappa|,
\, \, 
\|R_n(z)\|\leq \eta |\kappa| (n+1)^{4-\tau},
\]
for some constant $\eta$ (which does not depend on $\kappa$). Choose $|\kappa|$ so small that $\eta|\kappa|<\frac1{100}.$ Then, for  every
$n\geq 0$,
\[
A_{\alpha^\kappa}(n,z)u_0
=
A_\alpha(n,z)(I-Q(z))u_0
-
R_n(z)u_0.
\]

Set
\[
v(z):=(I-Q(z))u_0.
\]
Since $Q$ is continuous and $\|Q(z)\|<1/100$, the map $v$ is continuous and
\[
\|v(z)-u_0\|\leq \|Q(z)\|<\frac1{100}.
\]
In particular,
\[\|v(z)\|\geq 1-\frac1{100}>\frac12.\] 

Set $u_\kappa(z):=\frac{v(z)}{\|v(z)\|}.$  Then $u_\kappa:\mathfrak e\to\mathbb C^2$ is continuous, $\|u_\kappa(z)\|=1$, and, by Lemma \ref{lem:ukappa-isotropic}, satisfies  $u_\kappa(z)^*Ju_\kappa(z)=0$. Moreover,
\[
A_\alpha(n,z)(I-Q(z))u_0
=
\|v(z)\|A_\alpha(n,z)u_\kappa(z).
\]
Therefore,
\[
A_{\alpha^\kappa}(n,z)u_0
=
\|v(z)\|A_\alpha(n,z)u_\kappa(z)
-
R_n(z)u_0.
\]
Hence, for every $z\in\mathfrak e$ and every $n\geq0$,
\[
\|A_{\alpha^\kappa}(n,z)u_0\|
\geq
\|v(z)\|\,
\|A_\alpha(n,z)u_\kappa(z)\|
-
\|R_n(z)\|.
\]
Since $\|v(z)\|\geq1/2$, we obtain
\[
\|A_{\alpha^\kappa}(n,z)u_0\|
\geq
\frac12\|A_\alpha(n,z)u_\kappa(z)\|
-
\|R_n(z)\|.
\]

We now show that the last term is negligible. Since $\alpha$ is periodic, the Szeg\H{o} transfer matrices $A_\alpha(n,z)$ have at most polynomial growth on the essential spectrum. More precisely, there exists $\gamma>0$ such that
\[
\|A_\alpha(n,z)\|+\|A_\alpha(n,z)^{-1}\|
\leq \gamma(n+1),
\, \,  z\in\mathfrak e,\ n\geq0.
\]
Therefore, for every unit vector $w\in\mathbb C^2$,
\[
\|A_\alpha(n,z)w\|
\geq
\|A_\alpha(n,z)^{-1}\|^{-1}
\geq
\frac1{\gamma(n+1)}.
\]
Applying this to $w=u_\kappa(z)$ gives
\[
\frac{\|R_n(z)\|}
{\|A_\alpha(n,z)u_\kappa(z)\|}
\leq
\gamma(n+1)\eta|\kappa| (n+1)^{4-\tau}.
\]
Since $\tau>5$, we have
\[
\gamma(n+1)\eta|\kappa| (n+1)^{4-\tau}\to 0
\]
as $n\to\infty$, uniformly in $z\in\mathfrak e$. Thus, there exists $n_0\geq1$ such that, for every $n>n_0$ and every $z\in\mathfrak e$,
\[
\frac{\|R_n(z)\|}
{\|A_\alpha(n,z)u_\kappa(z)\|}
\leq
\frac14.
\]
Hence,
\[
\|A_{\alpha^\kappa}(n,z)u_0\|
\geq
\frac12\|A_\alpha(n,z)u_\kappa(z)\|
-
\frac14\|A_\alpha(n,z)u_\kappa(z)\|.
\]
that is,
\[
\|A_{\alpha^\kappa}(n,z)u_0\|
\geq
\frac14\|A_\alpha(n,z)u_\kappa(z)\|,
\]
and therefore
\[
\|A_{\alpha^\kappa}(n,z)u_0\|^{-2}
\leq
16\|A_\alpha(n,z)u_\kappa(z)\|^{-2},
\, \,  n>n_0,\ z\in\mathfrak e.
\]

Now let $f\in C(\mathfrak e)$ be non-negative and let \(f\) be a continuous extension of \(f\) to \(\partial\mathbb{D}\), which we also denote by \(f\). We first prove the inequality for functions compactly supported in the interior of the essential spectrum. Assume that $f\in C_c(\mathfrak e^\circ), \, \,  f\geq0.$
By the Bernstein--Szegő approximation theorem (Theorem \ref{thm:cmv-carmona}),
\[
\int_{\partial\mathbb D} f(z)\,d\mu_\alpha^\kappa(z)
=
\lim_{n\to\infty}
\int_{\partial\mathbb D}
f(z)\|A_{\alpha^\kappa}(n,z)u_0\|^{-2}\,d\ell(z).
\]
Using the estimate above, we get
\[
\int_{\partial\mathbb D} f(z)\,d\mu_\alpha^\kappa(z)
\leq
16\lim_{n\to\infty}
\int_{\partial\mathbb D}
f(z)\|A_\alpha(n,z)u_\kappa(z)\|^{-2}\,d\ell(z).
\]
Since $f\in C_c(\mathfrak e^\circ)$ and $u_\kappa$ is continuous with
$\|u_\kappa(z)\|=1$, Lemma \ref{teclemma} gives
\[
\lim_{n\to\infty}
\int_{\partial\mathbb D}
f(z)\|A_\alpha(n,z)u_\kappa(z)\|^{-2}\,d\ell(z)
=
\int_{\partial\mathbb D}
f(z)\rho_{\alpha,\kappa}(z)\,d\ell(z).
\]
Hence,
\[
\int_{\partial\mathbb D} f(z)\,d\mu_\alpha^\kappa(z)
\leq
16
\int_{\partial\mathbb D}
f(z)\rho_{\alpha,\kappa}(z)\,d\ell(z),
\]
where, by the Lemma \ref{teclemma},   the density $\rho_{\alpha,\kappa}(z)$ has a singularity of order at most $\varrho=1/2$.

It remains to remove the restriction that $f$ be compactly supported in $\mathfrak e^\circ$. Let $\mathcal E$ denote the finite set of band edges of $\mathfrak e$.
For each $\varepsilon>0$, choose a continuous function
$\varphi_\varepsilon:\partial\mathbb D\to[0,1]$ such that
\[
\varphi_\varepsilon(z)=0
\, \, \text{if}\, \, 
\operatorname{dist}(z,\mathcal E)<\varepsilon,
\]
and
\[
\varphi_\varepsilon(z)=1
\, \, \text{if}\, \, 
\operatorname{dist}(z,\mathcal E)>2\varepsilon.
\]
and such that $\operatorname{supp}(\varphi_\varepsilon|_{\mathfrak e}) \Subset \mathfrak e^\circ$ and $\varphi_\varepsilon \to \chi_{\mathfrak e^\circ}$ as $\varepsilon\downarrow0$. Then $f_\varepsilon(z):=f(z)\varphi_\varepsilon(z)$ belongs to $C_c(\mathfrak e^\circ)$ when restricted to $\mathfrak e$, and $0\leq f_\varepsilon(z)\leq f(z).$

By Lemma~\ref{lem:no-edge-atoms}, \[ \mu_\alpha^\kappa(\partial \mathfrak e)=0. \] Hence $f_\varepsilon\to f$ $\mu_\alpha^\kappa$-almost everywhere on $\mathfrak e$. Since the set of band edges is finite and therefore has zero Lebesgue measure, the same convergence holds almost everywhere with respect to $\rho_{\alpha,\kappa}\,d\ell$. 

Applying the inequality already proved to $f_\varepsilon$, we get
\[
\int_{\partial\mathbb D}
f_\varepsilon(z)\,d\mu_\alpha^\kappa(z)
\leq
16
\int_{\partial\mathbb D}
f_\varepsilon(z)\,\rho_{\alpha,\kappa}(z)\,d\ell(z).
\]
But, by dominated convergence, we also obtain
\[
\int_{\mathfrak e}
f(z)\,d\mu_\alpha^\kappa(z)
\leq
16
\int_{\mathfrak e}
f(z)\,\rho_{\alpha,\kappa}(z)\,d\ell(z),
\]
which completes the proof.
\end{proof}

\subsection{Proof of Theorem~\ref{thm:perturbed-periodic-cmv}}

\noindent $(i)$ Recall that, by Weyl’s theorem \cite{SimonOPUC1},  $\sigma_{\mathrm{ess}}(\mathcal C_\kappa) = \sigma_{\mathrm{ess}}(\mathcal C_\alpha) = \mathfrak e.$ 

By Theorem \ref{thm:comparison-spectral-measures-cmv}, for every
non-negative function $f\in C(\mathfrak e)$,
\[
\int_{\mathfrak e} f(z)\,d\mu_{\alpha^\kappa}(z)
\leq
16
\int_{\mathfrak e} f(z)\,\rho_{\alpha,\kappa}(z)\,d\ell(z),
\]
where the measure $d\mu_{\alpha,\kappa} (z) = \rho_{\alpha,\kappa}(z)\,d\ell(z)$ has a weak singularity of type $\varrho=1/2$.
Equivalently,
\[
\mu_{\alpha^\kappa}|_{\mathfrak e}
\leq
16 \mu_{\alpha,\kappa}|_{\mathfrak e}
\]
in the sense of positive Borel measures. It follows that $\mu_{\alpha^\kappa}|_{\mathfrak e}$ is also absolutely continuous with respect to Lebesgue measure on $\partial\mathbb D$. Moreover, the vector $\delta_0$ is cyclic for the half-line CMV matrix $\mathcal C_\kappa$. Therefore, the spectral type of $\mathcal C_\kappa$ on $\mathfrak e$ is determined by the canonical
spectral measure $\mu_{\alpha^\kappa}$. Since this measure has no singular component on $\mathfrak e$, the restriction of $\mathcal C_\kappa$ to $\mathfrak e$ is purely absolutely continuous. This proves item $(i)$.

\noindent $(ii)$ By Lemma \ref{teclemma}, $\mu_{\alpha,\kappa}$ is purely absolutely continuous on $\mathfrak e$. Moreover, for each band edge
$\zeta\in\partial\mathfrak e$, there exist a neighborhood $U_\zeta$ of $\zeta$ and a constant $C_\zeta>0$ such that
\[
\rho_{\alpha,\kappa}(z)
\leq
\frac{C_\zeta}{|z-\zeta|^{1/2}},
\, \,  z\in U_\zeta\cap\mathfrak e^\circ.
\]
Since there are only finitely many band edges and the density is bounded away from them, there exists a constant $\gamma>0$ such that, for every arc $I\subset\partial\mathbb D$ with $\ell(I)<1$,
\[
\mu_{\alpha,\kappa}(I\cap\mathfrak e)
\leq
\gamma\,\ell(I)^{1/2}.
\]
Namely, near a band edge $\zeta$, one estimates
\[
\int_{I}
\frac{d\ell(z)}{|z-\zeta|^{1/2}}
\leq
\gamma\,\ell(I)^{1/2}.
\]
Therefore, 
\[
\mu_{\alpha^\kappa}(I\cap\mathfrak e)
\leq
\gamma \mu_{\alpha,\kappa}(I\cap\mathfrak e)
\leq
\gamma \,\ell(I)^{1/2}.
\]
Thus $\mu_{\alpha^\kappa}|_{\mathfrak e}$ is $1/2$-H\"older continuous. This
proves item $(ii)$.

\noindent $(iii)$ The density estimate obtained above means precisely that the restriction of $\mu_{\alpha^\kappa}$ to $\mathfrak e$ is dominated by
a measure of singularity type $1/2$ in the sense of Section~3. Therefore, Proposition~\ref{thmStrichartz}, with
$\varrho=1/2$, gives
\[
\frac1K
\sum_{n=0}^{K-1}
\left|
\int_{\mathfrak e}z^n\,d\mu_{\alpha^\kappa}(z)
\right|^2
\lesssim \frac{\log K}{K}.
\]

Finally, by the spectral theorem and the definition of
$\mu_{\alpha^\kappa}$ as the spectral measure of $\delta_0$,
\[
\left\langle
\chi_{\mathfrak e}(\mathcal C_{\alpha^\kappa})\delta_0,
\mathcal C_{\alpha^\kappa}^n
\chi_{\mathfrak e}(\mathcal C_{\alpha^\kappa})\delta_0
\right\rangle
=
\int_{\mathfrak e}
z^n\,d\mu_{\alpha^\kappa}(z).
\]
Hence,
\[
\frac1K
\sum_{n=0}^{K-1}
\left|
\left\langle
\chi_{\mathfrak e}(\mathcal C_{\alpha^\kappa})\delta_0,
\mathcal C_{\alpha^\kappa}^n
\chi_{\mathfrak e}(\mathcal C_{\alpha^\kappa})\delta_0
\right\rangle
\right|^2
\lesssim \frac{1}{\|\chi_{\mathfrak e}(\mathcal C_{\alpha^\kappa})\delta_0\|^4} \frac{\log K}{K}.
\]
This is the asserted time-averaged dynamical estimate.


 
\begin{center} \Large{Acknowledgments} 
\end{center}
\addcontentsline{toc}{section}{Acknowledgments}

\noindent M. Aloisio was supported by grant \#2025/25338-1 from the São Paulo Research Foundation (FAPESP) and in part by grant \#01/24/APQ-03132-24 from the Minas Gerais Research Foundation (FAPEMIG). C. R. de Oliveira acknowledges partial support from CNPq (Brazilian National Council for Scientific and Technological Development) under Grant No. 316897/2025-6.


\noindent   Moacir Aloisio. Email: moacir.aloisio@ufvjm.edu.br, DME, UFVJM, Diamantina, MG, 39100-000, Brazil and ICMC, USP, S\~ao Carlos, SP, 13566-590, Brazil

\noindent  C\'esar R. de Oliveira. Email: oliveira@ufscar.br,  DM,   UFSCar, S\~ao Carlos, SP, 13560-970 Brazil

\noindent  Mariane Pigossi. Email: mariane.pigossi@ufes.br, DMAT, UFES, Vit\'oria, ES, 29075-910 Brazil

\end{document}